\title{A monotonicity formula for free boundary surfaces with respect to the unit ball
}
\author{
Alexander Volkmann \thanks{alexander.volkmann@aei.mpg.de}
}
\date{}
\begin{document}
\maketitle
\renewcommand{\abstractname}{Abstract}

\newcommand{\mint}{-\hspace{-0,38cm}\int}

\newtheorem{theorem}{Theorem}[section]

\newtheorem*{thmstar}{Theorem \ref{main theorem}}
\newtheorem{thm}[theorem]{Theorem}
\newtheorem{cor}[theorem]{Corollary}
\newtheorem{prop}[theorem]{Proposition}

\newtheorem{lemma}[theorem]{Lemma}
\theoremstyle{plain}
\newtheorem{definition}[theorem]{Definition}

\theoremstyle{definition}
\newtheorem{rmk}[theorem]{Remark}

\begin{abstract}
We prove a monotonicity identity for compact surfaces with free boundaries inside the boundary of unit ball in $\mathbb R^n$ that have square integrable mean curvature.
As one consequence we obtain a Li-Yau type inequality in this setting, thereby generalizing results of Oliveira and Soret \cite[Proposition 3]{MR1338315}, and Fraser and Schoen \cite[Theorem 5.4]{MR2770439}.

In the final section of this paper we derive some sharp geometric inequalities for compact surfaces with free boundaries inside arbitrary orientable support surfaces of class $C^2$. Furthermore, we obtain a sharp lower bound for the $L^1$-tangent-point energy of closed curves in $\mathbb R^3$ thereby answering a question raised by Strzelecki, Szuma{\'n}ska and von der Mosel \cite{MR3091327}.
\end{abstract}

\section{Introduction}
The main goal of this paper is to establish a monotonicity formula for compact free boundary surfaces (unless otherwise stated this means $2$-dimensional, smooth, embedded) with respect to the unit ball in $\mathbb R^n$.
The corresponding result for closed, i.e. compact and boundaryless, surfaces was proved by Simon \cite{MR1243525}. (See also Kuwert and Sch\"atzle \cite{MR2119722} for a generalization to integer rectifiable $2$-varifolds with square integrable generalized mean curvature.) For a closed surface $\Sigma$, and radii $0<\sigma<\rho<\infty$ Simon's monotonicity identity reads as follows.
\[
g_{x_0}(\rho) - g_{x_0}(\sigma) 
= \frac{1}{\pi}\int_{\Sigma\cap B_\rho(x_0)\setminus B_\sigma(x_0)}  \left| \frac{1 }{4}\vec H + \frac{(x-x_0)^\perp }{|x-x_0|^2}\right|^2   \,d\mathcal H^2 ,
\]
where
\[
g_{x_0}(r):=  \frac{\mathcal H^2(\Sigma\cap B_r(x_0) )}{\pi r^2} + \frac{1}{16 \pi}\int_{\Sigma\cap B_r(x_0)}  |\vec H|^2  \,d\mathcal H^2 +  \frac{1}{2 \pi r^2}\int_{\Sigma\cap B_r(x_0)} \vec H \cdot (x-x_0)\,d\mathcal H^2.
\]
This monotonicity formula plays an important role in the existence proof
of surfaces minimizing the Willmore functional \cite{MR1243525}. It also yields an alternative proof of the so called Li-Yau inequality \cite{MR674407}. Very recently, Lamm and Sch\"atzle \cite{1310.4971} used it to establish a quantitative version of Codazzi's theorem, thereby extending results of De Lellis and M\"uller \cite{MR2169583,MR2232206} to arbitrary codimension.

In this paper we prove a monotonicity identity for compact free boundary surfaces with respect to the unit ball in $\mathbb R^n$, i.e. compact surfaces with non-empty boundary meeting the boundary of the unit ball orthogonally. In fact, our results hold in the varifold context (see Section \ref{setting} for the precise assumptions).

As a consequence we obtain area bounds, and the existence of the density at \emph{every} point on the surface. As a limiting case of the monotonicity identity we obtain the Li-Yau type inequality 
\begin{equation}\label{Willmore inequality intro}
2\pi \theta_{max} \leq \frac{1}{4}\int_\Sigma  |\vec H|^2  \,d\mathcal H^2 + \int_{\partial \Sigma}  x\cdot \eta \,d\mathcal H^1,
\end{equation}
where $\theta_{max}$ denotes the maximal multiplicity of the surface $\Sigma$ (see Theorem \ref{LiYau}).

A special case of \eqref{Willmore inequality intro} (for free boundary CMC surfaces inside the unit ball of $\mathbb R^3$) has appeared in a work of Ros and Vergasta \cite[Proposition 3]{MR1338315}, attributing the result to Oliveira and Soret. The proof given in \cite{MR1338315} seems to also work for any compact free boundary surface with respect to the unit ball in $\mathbb R^n$.
Unaware of this result Fraser and Schoen independently established the inequality for free boundary minimal surfaces inside the unit ball in $\mathbb R^n$ (see \cite[Theorem 5.4]{MR2770439}.)
In this context we also mention the work of Brendle \cite{MR2972603} in which the author generalizes the inequality \cite[Theorem 5.4]{MR2770439} to higher-dimensional free boundary minimal surfaces inside the unit ball in $\mathbb R^n$.

The paper is organized as follows. In Section \ref{setting} we introduce the notation and describe the setting we work in. In Section \ref{monotonicity} we establish the monotonicity formula (Theorem \ref{thm:monotonicity}) and prove the existence of the density (Theorem \ref{thm:density}). In Section \ref{applications} we give some geometric applications that follow from the results of Section \ref{monotonicity}. Finally, in Section \ref{general support surface} we prove sharp geometric inequalities for compact free boundary surfaces with respect to arbitrary orientable support surfaces of class $C^2$. We also
include a sharp lower bound for the $L^1$-tangent-point energy of closed curves in $\mathbb R^3$.

\renewcommand{\abstractname}{Acknowledgements}
\begin{abstract}
I would like to thank my PhD advisor Professor Gerhard Huisken for helpful conversations. Moreover, I would like to thank Dr. Simon Blatt for bringing the paper \cite{MR3091327} to my attention.
\end{abstract}

\section{The setting}\label{setting}
\noindent
We use essentially the same notation as in \cite{MR2119722}. Unless stated otherwise we assume that $\mu$ is an integer rectifiable $2$-varifold in $\mathbb R^n$ of compact support $\Sigma:= {\rm spt}(\mu)$, $\Sigma \cap \partial B \neq \emptyset$, with generalized mean curvature $\vec H \in L^2(\mu;\mathbb R^n)$ such that
\begin{equation}\label{1st variation Neumann}
\int {\rm div}_\Sigma X\,d\mu = - \int  \vec H\cdot X \,d\mu
\end{equation}
for all vector fields $X \in C_c^1(\mathbb R^n,\mathbb R^n)$ with $ X\cdot \gamma =0$ on $ \partial B $, where $\gamma(x)=x$ denotes the outward unit normal to $ B$ (the open unit ball in $\mathbb R^n$). Furthermore, we assume that $\mu(\partial B) =0$.

It follows from the work of Gr\"uter and Jost \cite{MR863638} that $\mu$ has bounded first variation $\delta \mu$. Hence, by Lebesgue's decomposition theorem there exists a Radon measure $\sigma= |\delta \mu| \llcorner Z$ ($Z= \{x \in \mathbb R^n: D_\mu|\delta \mu|(x)= +\infty\}$) and a vector field $\eta \in L^1(\sigma;\mathbb R^n)$ with $ | \eta | =1$ $\sigma$-a.e. such that
\begin{equation}\label{1st variation Neumann with boundary term}
\delta \mu(X) =^{def} \int {\rm div}_\Sigma X\,d\mu = - \int  \vec H\cdot X \,d\mu + \int  X\cdot \eta \,d\sigma 
\end{equation}
for all $X \in C_c^1(\mathbb R^n,\mathbb R^n)$. It easily follows from \eqref{1st variation Neumann} that 
\[
{\rm spt}(\sigma) \subset  \partial B\quad\text{and}\quad \eta \in\{ \pm \gamma\} \;\; \sigma\text{-a.e.}.
\]
We shall henceforth refer to such varifolds $\mu$ as compact \emph{free boundary varifolds} (with respect to the unit ball).

In case $\mu$ is given by a smooth embedded surface $\Sigma$ (i.e. $\mu = \mathcal H^2 \llcorner \Sigma$) $\eta$ is the outward unit conormal to $\Sigma$ and $\sigma = \mathcal H^1 \llcorner \partial \Sigma$, and we say that $\Sigma $ is a compact free boundary surface (with respect to the unit ball).

Note that since $\Sigma$ is compact we may use the position vector field as a test function to obtain
\begin{equation}\label{position vector}
2\mu(\mathbb R^n) = - \int \vec H\cdot x \,d\mu + \int x\cdot\eta \,d\sigma  .
\end{equation}

\section{The monotonicity formula}\label{monotonicity}
The following monotonicity identity is the free boundary analogue of the monotonicity identity \cite[(1.2)]{MR1243525}, \cite[(A.3)]{MR2119722}.

\begin{theorem}{\bf{(monotonicity identity)}}\label{thm:monotonicity}
For $x_0 \in \mathbb R^n$ consider the functions $g_{x_0}$ and $\hat g_{x_0} $ given by
\begin{align*}
g_{x_0}&(r):=  \frac{\mu(B_r(x_0) )}{\pi r^2} + \frac{1}{16 \pi}\int_{B_r(x_0)}  |\vec H|^2  \,d\mu +  \frac{1}{2 \pi r^2}\int_{B_r(x_0)} \vec H \cdot (x-x_0)\,d\mu
\end{align*}
and
\begin{align*}
\hat g_{x_0}(r)&:=  g_{\xi(x_0)}(r/|x_0|)\\
& - \frac{1}{\pi(|x_0|^{-1} r)^2} \int_{\hat B_r(x_0)} ( |x- \xi(x_0)|^2  +P_x(x- \xi(x_0)) \cdot x  )\,d\mu \\
& -  \frac{1}{2 \pi (|x_0|^{-1} r)^2}\int_{\hat B_r(x_0)} \vec H \cdot (  |x- \xi(x_0)|^{2}x)\,d\mu \\
&+ \frac{1}{2\pi} \int_{\hat B_r(x_0)}  \vec H\cdot x\,d\mu  + \frac{\mu( \hat B_r(x_0))  }{\pi},
\end{align*}
for $x_0 \neq 0$, and
\[
\hat g_{0}(r) =  -\frac{ \min(r^{-2},1) }{2\pi}\int  x \cdot\eta \,d\sigma .
\]
Here $\xi(x): = \frac{x}{|x|^2}$ and $\hat B_r(x_0) = B_{r/|x_0|}(\xi(x_0))$.
Then for any $0< \sigma <\rho < \infty$ we have
\begin{align}\label{monotonicity identity}
\frac{1}{\pi}&\int_{B_\rho(x_0)\setminus B_\sigma(x_0)}  \left| \frac{1 }{4}\vec H + \frac{(x-x_0)^\perp }{|x-x_0|^2}\right|^2   \,d\mu \nonumber\\
&+ \frac{1}{\pi}\int_{\hat B_\rho(x_0)\setminus \hat B_\sigma(x_0)}  \left| \frac{1 }{4}\vec H + \frac{(x-\xi(x_0))^\perp }{|x-\xi(x_0)|^2}\right|^2   \,d\mu\\
&\quad= (g_{x_0}(\rho)+\hat g_{x_0}(\rho)) - (g_{x_0}(\sigma) +\hat g_{x_0}(\sigma)),\nonumber
\end{align}
where the second integral in \eqref{monotonicity identity} is to be interpreted as $0$ in case $x_0=0$.
Here $(x-x_0)^\perp: = (x-x_0) - P_x(x-x_0)$, where $P_x$ denotes the orthogonal projection onto $T_x\mu$, the approximate tangent space of $\mu$ at $x$. In particular, $g+\hat g$ is non-decreasing.
\end{theorem}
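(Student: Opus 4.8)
\noindent\emph{Proof plan.}
Granting the identity \eqref{monotonicity identity}, the closing assertion is immediate: for $\sigma\le\rho$ we have $B_\sigma(x_0)\subseteq B_\rho(x_0)$ and $\hat B_\sigma(x_0)\subseteq\hat B_\rho(x_0)$, so its left-hand side is a sum of two non-negative integrals, whence $(g_{x_0}+\hat g_{x_0})(\rho)\ge(g_{x_0}+\hat g_{x_0})(\sigma)$. So the whole content is \eqref{monotonicity identity}, which I would prove by running the classical monotonicity computation of Simon \cite{MR1243525} and Kuwert--Sch\"atzle \cite{MR2119722} against a test vector field that is \emph{reflected across $\partial B$}, in the spirit of the free boundary reflection principle of Gr\"uter--Jost \cite{MR863638}.

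Fix $0<\sigma<\rho<\infty$ and set $X_1(x):=\gamma(|x-x_0|)(x-x_0)$, where $\gamma=\gamma_{\sigma,\rho}$ is the standard cut-off weight (a suitably normalised truncation of $t^{-2}$ between the radii $\sigma$ and $\rho$). Running Simon's monotonicity computation for $X_1$ --- expanding ${\rm div}_\Sigma X_1$ via ${\rm div}_\Sigma(x-x_0)=2$ and $\nabla_\Sigma|x-x_0|=|x-x_0|^{-1}P_x(x-x_0)$, combining with $\vec H\cdot X_1$, completing the square, integrating in the radii (legitimate for a.e.\ values, since $r\mapsto\mu(B_r(x_0))$ is monotone, $\mu(\partial B_r(x_0))=0$ for all but countably many $r$, and $\mu(\partial B)=0$), and retaining the boundary term of \eqref{1st variation Neumann with boundary term} --- gives
\[
g_{x_0}(\rho)-g_{x_0}(\sigma)=\frac1\pi\int_{B_\rho(x_0)\setminus B_\sigma(x_0)}\Big|\tfrac14\vec H+\tfrac{(x-x_0)^\perp}{|x-x_0|^2}\Big|^2\,d\mu+\frac1\pi\int X_1\cdot\eta\,d\sigma ,
\]
which for a closed surface reduces to Simon's identity. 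For $x_0=0$ one takes $X_1=\gamma(|x|)x$, so that $\int X_1\cdot\eta\,d\sigma=\gamma(1)\int x\cdot\eta\,d\sigma$ since ${\rm spt}(\sigma)\subseteq\partial B$; matching this term against the definition of $\hat g_0$ turns the displayed identity into \eqref{monotonicity identity} (there is no $\hat B$-integral in this case). For $x_0\neq0$ let $\xi(x):=x/|x|^2$, $\bar x_0:=\xi(x_0)$, and $X_2:=\xi_{\#}X_1$; recall that $\xi$ is a conformal involution of $\mathbb R^n\setminus\{0\}$ fixing $\partial B$ setwise and preserving orthogonality, with $D\xi(x)=|x|^{-2}(\mathrm{Id}-2\hat x\otimes\hat x)$ and $|\xi(x)-\xi(x')|=|x-x'|/(|x|\,|x'|)$; in particular $|x-\bar x_0|=|x-x_0|/|x_0|$ on $\partial B$, which is exactly why the cut-off radii around $\bar x_0$ are $r/|x_0|$ and why $\partial B_r(x_0)$ and $\partial\hat B_r(x_0)$ meet $\partial B$ along the same set. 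Carrying out the \emph{same} computation for $X_2$ in the $y=\xi(x)$ coordinates --- which introduces the conformal Jacobian $|x|^{-2}$ and the transformation rule of $\vec H$ under $\xi$, converts the relevant square into $\big|\tfrac14\vec H+\tfrac{(x-\bar x_0)^\perp}{|x-\bar x_0|^2}\big|^2$, and produces precisely the extra terms $|x-\bar x_0|^2$, $P_x(x-\bar x_0)\cdot x$, $\vec H\cdot(|x-\bar x_0|^2x)$, $\vec H\cdot x$ and the area term occurring in the definition of $\hat g_{x_0}$ --- gives the analogous identity
\[
\hat g_{x_0}(\rho)-\hat g_{x_0}(\sigma)=\frac1\pi\int_{\hat B_\rho(x_0)\setminus\hat B_\sigma(x_0)}\Big|\tfrac14\vec H+\tfrac{(x-\bar x_0)^\perp}{|x-\bar x_0|^2}\Big|^2\,d\mu+\frac1\pi\int X_2\cdot\eta\,d\sigma .
\]
A short computation with $D\xi|_{\partial B}=\mathrm{Id}-2\hat x\otimes\hat x$ now shows that $Z:=X_1+X_2$ is tangent to $\partial B$; since $\eta\in\{\pm\gamma\}$ on ${\rm spt}(\sigma)\subseteq\partial B$, the two boundary remainders add up to $\frac1\pi\int Z\cdot\eta\,d\sigma=0$ (the cut-off of $\xi_{\#}X_1$ far away from the bounded set ${\rm spt}(\mu)$ being harmless). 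Adding the two displayed identities therefore yields \eqref{monotonicity identity}.

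The main obstacle is the bookkeeping behind the second displayed identity: carrying the conformal change of variables $y=\xi(x)$ through the first-variation expansion for $X_2=\xi_{\#}X_1$ so that nothing is lost and everything re-assembles into the perfect square $\big|\tfrac14\vec H+\tfrac{(x-\bar x_0)^\perp}{|x-\bar x_0|^2}\big|^2$ plus exactly the (rather elaborate) list of terms defining $\hat g_{x_0}$ --- keeping track of the interplay of the factors $|x_0|^{-2}$, $|x|^{-2}$, $|x-\bar x_0|^2$ and of the conformal transformation rule for $\vec H$. A secondary point is the behaviour at the centre of inversion when $0\in{\rm spt}(\mu)$, where $\xi_{\#}X_1$ is singular: this is dealt with by excising a small ball about $0$ and passing to the limit, using $\mu(\{0\})=0$ and $\vec H\in L^2(\mu)$. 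The tangency of $Z$ along $\partial B$, the case $x_0=0$, and the monotonicity conclusion are then comparatively routine.
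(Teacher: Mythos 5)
Your overall skeleton is the right one -- combine Simon's computation centred at $x_0$ with a ``reflected'' contribution centred at $\xi(x_0)$, choosing the total test field tangent to $\partial B$ so that \eqref{1st variation Neumann} applies with no boundary contribution -- and your tangency observation on $\partial B$, the treatment of $x_0=0$, and the monotonicity conclusion are essentially fine. But the actual content of the theorem is the second displayed identity, i.e.\ that the reflected contribution assembles into the perfect square centred at $\xi(x_0)$ \emph{plus exactly} the terms defining $\hat g_{x_0}$, and this you do not prove: you assert that the change of variables $y=\xi(x)$ ``produces precisely the extra terms'' and yourself flag the bookkeeping as the main obstacle. In the varifold setting this is not mere bookkeeping. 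To ``run the same computation in the $y=\xi(x)$ coordinates'' you would first have to show that the pushforward varifold $\xi_\#\mu$ again satisfies a first variation identity of the form \eqref{1st variation Neumann with boundary term} with generalized mean curvature given by the conformal transformation law, and that testing $\delta\mu$ with $\xi_\#X_1$ can be rewritten as a first variation of $\xi_\#\mu$; neither is stated nor proved. Note also that the square $\bigl|\tfrac14\vec H+\tfrac{(x-\xi(x_0))^\perp}{|x-\xi(x_0)|^2}\bigr|^2$ cannot be obtained from ${\rm div}_\Sigma(\xi_\#X_1)$ by the elementary completion of the square, because $\xi_\#X_1$ differs from the field $\tfrac{x-\xi(x_0)}{|x-\xi(x_0)|^2}-x$ (cut off at radii $\sigma/|x_0|,\rho/|x_0|$) by $|x|$-dependent factors away from $\partial B$; the two fields coincide only on $\partial B$. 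Since they do coincide on ${\rm spt}(\sigma)\subset\partial B$, the identity you claim is in fact true, but your proposed route to it is exactly the unproven (and hardest) part. A minor further point: with the cut-off normalisation of the paper the boundary remainder enters your first displayed identity as $-\frac1{2\pi}\int X_1\cdot\eta\,d\sigma$, not $+\frac1\pi\int X_1\cdot\eta\,d\sigma$, and the excision near the centre of inversion is unnecessary, since the cut-off already makes $\xi_\#X_1$ vanish near the origin.

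By contrast, the paper never inverts the varifold and needs no transformation rule for $\vec H$. It defines $X_2$ \emph{explicitly}, guided by the Neumann Green's function of the disk, whose gradient is $\tfrac{x-x_0}{|x-x_0|^2}+\tfrac{x-\xi(x_0)}{|x-\xi(x_0)|^2}-x$: the extra linear term $-x$, together with the quadratic cut-off corrections $\mp r^{-2}\min(|x_0|\,|x-\xi(x_0)|,r)^2\,x$, is what makes $X=X_1+X_2$ tangent to $\partial B$ (using $|x-x_0|=|x_0|\,|x-\xi(x_0)|$ there), so \eqref{1st variation Neumann} applies directly. Because the leading part of $X_2$ is literally Simon's field centred at $\xi(x_0)$ with radii $\sigma/|x_0|$, $\rho/|x_0|$, the second square drops out of the same elementary computation applied to $\mu$ itself, and the remaining terms ${\rm div}_\Sigma(|x-\xi(x_0)|^2x)=2|x-\xi(x_0)|^2+2P_x(x-\xi(x_0))\cdot x$, $\vec H\cdot(|x-\xi(x_0)|^2x)$, $\vec H\cdot x$ and the area term are exactly the definition of $\hat g_{x_0}$; the case $x_0=0$ is handled with the radial field and \eqref{position vector}. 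So while your reflection-by-pushforward plan could in principle be pushed through, it replaces a direct Lipschitz-test-field computation by a genuinely more delicate conformal argument whose central calculation is missing; to make your proposal a proof you must either carry out that calculation (including the first variation structure of $\xi_\#\mu$) or switch to the explicit field used in the paper.
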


Before we give a proof of the above theorem we note (cf. \cite{MR2566733}) that
the Neumann Green's function of the disk of radius $R$ in $\mathbb R^2$ is, up to a multiplicative and additive constant, given by
\[
G(x,y) =\log(|x-y|)+ \log \left(\frac{|x|}{R} |\xi(x)-y| \right)+\frac{1}{2R^2}|y|^2, 
\]
where $\xi(x): = R^2\frac{x}{|x|^2}$. We have, for $R=1$,
\begin{align*}
(D_xG)(x,y)= -\frac{x-y}{|x-y|^2}  -\frac{\xi(x)-y}{|\xi(x)-y|^2} -y.
\end{align*}
\begin{proof}(of the theorem)
Let $x_0 \in \mathbb R^n$. We define
\[
Y(x): = 
\begin{cases}
 \frac{x-x_0}{|x-x_0|^2} + \frac{x-\xi(x_0)}{|x-\xi(x_0)|^2} -x&,x_0\neq 0 \\
 \frac{x}{|x|^2}-x&,x_0=0 . 
\end{cases}
\]
For $0 < \sigma <\rho <\infty $ we define the vector field $X$ by
\begin{equation}\label{defX}
X(x):=X_1(x) +X_2(x), 
\end{equation}
where we set
\[
X_1(x) : = ( |x-x_0|_\sigma^{-2}-\rho^{-2})^+ (x-x_0) 
\]
and
\begin{align*}
X_2(x) & : = 
\begin{cases}
( |x-\xi(x_0)|_{\sigma |x_0|^{-1}}^{-2}-|x_0|^2\rho^{-2})^+ (x-\xi(x_0))\\
\quad - \sigma^{-2}\min(|x_0||x-\xi(x_0)|,\sigma )^{2} x \\
\quad+  \rho^{-2}\min(|x_0||x-\xi(x_0)|,\rho)^{2}x &, x_0\neq 0 \\
-\sigma^{-2}\min(1,\sigma)^{2} x +  \rho^{-2}\min(1,\rho)^{2}x &, x_0=0 ,
\end{cases}
\end{align*}
and where $|v|_\sigma := \max(|v|,\sigma)$. 

First, assume that $x_0 \neq 0$. Then, we set for $r>0$
\[
\hat B_r(x_0) = B_{r/|x_0|}(\xi(x_0)) .
\]
To simplify notation, we shall write $B_r$ and $\hat B_r$ instead of $B_r(x_0)$ and $\hat B_r(x_0)$, respectively. We may decompose $\mathbb R^n$ into a disjoint union over the elements of the family of sets $\mathcal F_1$ or $\mathcal F_2$ given by
\begin{align*} 
\mathcal F_1  := \{ B_\sigma, \, B_\rho\setminus B_\sigma, \,\mathbb R^n  \setminus B_\rho \}
\quad\text{and}\quad
\mathcal F_2  := \{ \hat B_\sigma, \, \hat B_\rho\setminus \hat B_\sigma, \,\mathbb R^n  \setminus \hat B_\rho \},
\end{align*}
respectively.
For $x \in \partial B$ we have $ |x-x_0| = |x_0||x-\xi(x_0)|$. 
Therefore, $ \partial B$ can be decomposed into a disjoint union over the elements of the family of sets $\mathcal F_{ \partial B}$ given by
\begin{align*} 
\mathcal F_{ \partial B}  := \{ & \partial B\cap(B_\sigma \cap \hat B_\sigma),\, \partial B\cap [ (B_\rho\setminus B_\sigma )\cap (\hat B_\rho\setminus \hat B_\sigma )], \, \partial B \setminus (  B_\rho\cup \hat B_\rho)\},
\end{align*}
and so we have for $x\in \partial B$
\begin{equation}\label{X on Sigma}
X(x)  = 
\begin{cases}
( \sigma^{-2}-\rho^{-2})  |x-x_0|^{2} Y(x)&,  0\leq |x-x_0| \leq \sigma\\
Y(x)-\rho^{-2} |x-x_0|^{2}Y(x) &,  \sigma<  |x-x_0| <  \rho \\
0 &,  \rho \leq  |x-x_0|.
\end{cases}
\end{equation}
This implies that $X$ is a valid test vector field in \eqref{1st variation Neumann} in case $\partial B_\sigma,\partial \hat B_\sigma ,\partial B_\rho$ and $\partial \hat B_\rho$ have $\mu$ measure zero, i.e. for a.e. $\sigma$ and $\rho$.
We compute
\[
\int_A{\rm div}_\Sigma X_i \,d\mu \quad\text{and}\quad  \int_A \vec H \cdot X_i \,d\mu
\]
for all sets $A \in \mathcal F_i$, $i=1,2$, separately. We have
\begin{align*}
\int {\rm div}_\Sigma X_2 \,d\mu
 &= \sum_{A \in \mathcal F_2} \int_A  {\rm div}_\Sigma X_2 \,d\mu \\
 & = 2 |x_0|^2 \sigma^{-2} \mu(\hat B_\sigma ) -2 |x_0|^2 \rho^{-2} \mu(\hat B_\rho ) \\
 & \quad - 2 |x_0|^2 \sigma^{-2} \int_{\hat B_\sigma} |x-\xi(x_0)|^2\,d\mu +2 |x_0|^2\rho^{-2}\int_{\hat B_\rho} |x-\xi(x_0)|^2\,d\mu \\
 & \quad -2|x_0|^2 \sigma^{-2}  \int_{\hat B_\sigma}  P_x(x-\xi(x_0)) \cdot x\,d\mu  + 2|x_0|^2 \rho^{-2}\int_{\hat B_\rho}  P_x(x-\xi(x_0)) \cdot x\,d\mu \\
 &\quad +2 \int_{\hat B_\rho \setminus \hat B_\sigma}  \frac{| (x-\xi(x_0))^\perp |^2}{|x-\xi(x_0)|^4}\,d\mu \\
 &\quad - 2 \mu(\hat B_\rho \setminus \hat B_\sigma) ,
\end{align*}
and
\begin{align*}
 \int & \vec H \cdot X_2 \,d\mu
 =  \sum_{A \in \mathcal F_2} \int_A \vec H \cdot X_2 \,d\mu \\
 & =  |x_0|^2 \sigma^{-2} \int_{\hat B_\sigma} \vec H \cdot (x-\xi(x_0)) \,d\mu - |x_0|^2 \rho^{-2} \int_{\hat B_\rho} \vec H \cdot (x-\xi(x_0)) \,d\mu \\
 & \quad - |x_0|^2 \sigma^{-2} \int_{\hat B_\sigma} \vec H \cdot (|x-\xi(x_0)|^2x)\,d\mu  + |x_0|^2 \rho^{-2} \int_{\hat B_\rho} \vec H \cdot (|x-\xi(x_0)|^2x)\,d\mu \\
 &\quad + \int_{\hat B_\rho \setminus \hat B_\sigma} \vec H \cdot \frac{ x-\xi(x_0)}{|x-\xi(x_0)|^2}\,d\mu  - \int_{\hat B_\rho \setminus \hat B_\sigma} \vec H \cdot x\,d\mu.
\end{align*}
Using the fact that for any vector $v \in \mathbb R^n$
\begin{equation}\label{complete the square}
2\left| \frac{1 }{4}\vec H + v^\perp \right|^2 =  \frac{1 }{8} |\vec H|^2 + 2 |v^\perp |^2 + \vec H \cdot v, 
\end{equation}
where we used Brakke's orthogonality theorem (cf. \cite[Chapter 5]{MR485012}),
we get that
\begin{align*}
 \int & {\rm div}_\Sigma X_2 \,d\mu + \int \vec H \cdot X_2 \,d\mu \\
  & = 2 |x_0|^2 \sigma^{-2} \mu(\hat B_\sigma ) -2 |x_0|^2 \rho^{-2} \mu(\hat B_\rho ) - \frac{1}{8} \int_{\hat B_\rho \setminus \hat B_\sigma} |\vec H |^2\,d\mu\\
 &\quad+  |x_0|^2 \sigma^{-2} \int_{\hat B_\sigma} \vec H \cdot (x-\xi(x_0)) \,d\mu - |x_0|^2 \rho^{-2} \int_{\hat B_\rho} \vec H \cdot (x-\xi(x_0)) \,d\mu \\
  & \quad - 2 |x_0|^2 \sigma^{-2} \int_{\hat B_\sigma} |x-\xi(x_0)|^2\,d\mu +2 |x_0|^2\rho^{-2}\int_{\hat B_\rho} |x-\xi(x_0)|^2\,d\mu \\
 & \quad -2|x_0|^2 \sigma^{-2}  \int_{\hat B_\sigma}  P_x(x-\xi(x_0)) \cdot x\,d\mu  + 2|x_0|^2 \rho^{-2}\int_{\hat B_\rho}  P_x(x-\xi(x_0)) \cdot x\,d\mu \\
 & \quad - |x_0|^2 \sigma^{-2} \int_{\hat B_\sigma} \vec H \cdot (|x-\xi(x_0)|^2x)\,d\mu  + |x_0|^2 \rho^{-2} \int_{\hat B_\rho} \vec H \cdot (|x-\xi(x_0)|^2x)\,d\mu \\
 &\quad  - 2 \mu(\hat B_\rho \setminus \hat B_\sigma)    - \int_{\hat B_\rho \setminus \hat B_\sigma} \vec H \cdot x\,d\mu + 2\int_{ \hat B_{\rho}\setminus \hat B_{\sigma}} \left| \frac{1 }{4}\vec H + \frac{(x-\xi(x_0))^\perp }{|x-\xi(x_0)|^2} \right|^2 \,d\mu  .
\end{align*}
Similarly, (in fact exactly as in \cite{MR2119722}) we get that
\begin{align*}
 \int {\rm div}_\Sigma X_1 &\,d\mu + \int \vec H \cdot X_1 \,d\mu \\
  & = 2 \sigma^{-2} \mu (B_\sigma)-2 \rho^{-2} \mu (B_\rho) - \frac{1}{8}   \int_{B_\rho \setminus B_\sigma} |\vec H|^2 \,d\mu \\
  &\quad +\sigma^{-2} \int_{B_\sigma} \vec H \cdot (x-x_0)\,d\mu - \rho^{-2}\int_{B_\rho} \vec H \cdot (x-x_0)\,d\mu \\
   &\quad +2\int_{B_\rho\setminus B_\sigma } \left| \frac{1 }{4}\vec H + \frac{(x-x_0)^\perp }{|x-x_0|^2} \right|^2\,d\mu .
\end{align*}
Since, as mentioned above $X=X_1+X_2$ is an admissible vector field for \eqref{1st variation Neumann}, we get after rearranging that
\begin{align*}
& 2\int_{B_\rho\setminus B_\sigma } \left| \frac{1 }{4}\vec H + \frac{(x-x_0)^\perp }{|x-x_0|^2} \right|^2\,d\mu + 2\int_{ \hat B_{\rho}\setminus \hat B_{\sigma}} \left| \frac{1 }{4}\vec H + \frac{(x-\xi(x_0))^\perp }{|x-\xi(x_0)|^2} \right|^2 \,d\mu \\
&=  2 \rho^{-2} \mu (  B_\rho ) - 2\sigma^{-2}\mu(B_\sigma  )  + 2 |x_0|^2\rho^{-2} \mu( \hat B_{\rho}  )- 2 |x_0|^2 \sigma^{-2}  \mu( \hat B_\sigma)\\
& \quad +\frac{1 }{8} \int_{B_\rho\setminus B_\sigma }  |\vec H|^2 \,d\mu  + \frac{1 }{8} \int_{ \hat B_{\rho}\setminus \hat B_{\sigma}} |\vec H|^2 \,d\mu + 2 \mu( \hat B_{\rho}\setminus \hat B_{\sigma})\\
&\quad  + \rho^{-2}  \int_{ B_\rho } \vec H \cdot (x-x_0)\,d\mu - \sigma^{-2} \int_{B_\sigma} \vec H \cdot (x-x_0)\,d\mu\\
&\quad+|x_0|^2\rho^{-2}\int_{\hat B_{\rho} } \vec H \cdot (x-\xi(x_0)) \,d\mu -|x_0|^2 \sigma^{-2} \int_{ \hat B_\sigma} \vec H \cdot  (x-\xi(x_0))\,d\mu  \\
&\quad  - |x_0|^2\rho^{-2} \int_{\hat B_{\rho} } \vec H \cdot ( |x-\xi(x_0)|^{2} x) \,d\mu + |x_0|^2 \sigma^{-2} \int_{ \hat B_\sigma} \vec H \cdot (|x-\xi(x_0)|^2x) \,d\mu\\
&\quad -2 |x_0|^2\rho^{-2} \int_{\hat B_{\rho} } P_x(x-\xi(x_0))\cdot x \,d\mu+2|x_0|^2\sigma^{-2}  \int_{ \hat B_\sigma}  P_x(x-\xi(x_0)) \cdot x \,d\mu \\
&\quad  -2|x_0|^2\rho^{-2}\int_{ \hat B_{\rho}} |x-\xi(x_0)|^{2} \,d\mu +2|x_0|^2\sigma^{-2}  \int_{ \hat B_\sigma} |x-\xi(x_0)|^2 \,d\mu\\
&\quad  +\int_{\hat B_{\rho}\setminus \hat B_{\sigma} } \vec H \cdot x \,d\mu.
\end{align*}
In view of the definition of $g$ and $\hat g$ we may rewrite this as
\begin{align*}
\frac{1}{\pi}&\int_{B_\rho(x_0)\setminus B_\sigma(x_0)}  \left| \frac{1 }{4}\vec H + \frac{(x-x_0)^\perp }{|x-x_0|^2}\right|^2   \,d\mu 
+ \frac{1}{\pi}\int_{\hat B_\rho(x_0)\setminus \hat B_\sigma(x_0)}  \left| \frac{1 }{4}\vec H + \frac{(x-\xi(x_0))^\perp }{|x-\xi(x_0)|^2}\right|^2   \,d\mu\\
&= (g_{x_0}(\rho)+\hat g_{x_0}(\rho)) - (g_{x_0}(\sigma) +\hat g_{x_0}(\sigma)).
\end{align*}
Now, assume that $x_0 = 0$. Then \eqref{X on Sigma} still holds, and we may again test \eqref{1st variation Neumann} with $X$. (Again first for a.e. $\sigma$ and $\rho$.)
We write $B_r$ instead of $B_r(0)$, and may  decompose $\mathbb R^n$ into a disjoint union over the elements of the family of sets $\mathcal F$ given by
\[
\mathcal F  := \{ B_\sigma, \, B_\rho\setminus B_\sigma, \,\mathbb R^n  \setminus B_\rho \}.
\]
Recalling that
\[
X_1(x) : = ( |x|_\sigma^{-2}-\rho^{-2})^+ x
\]
and
\[
X_2(x) : =( \min(\rho^{-2},1) -\min(\sigma^{-2},1) )x,
\]
we compute
\[
\int_A{\rm div}_\Sigma X_1 \,d\mu \quad\text{and}\quad \int_A \vec H \cdot X_1 \,d\mu
\]
for all sets $A \in \mathcal F$. We have
\begin{align*}
\int  {\rm div}_\Sigma X\,d\mu 
&= \int {\rm div}_\Sigma X_1 \,d\mu + \int {\rm div}_\Sigma X_2 \,d\mu\\
 & = 2 \sigma^{-2} \mu (B_\sigma) -2\rho^{-2} \mu (B_\rho)\\
 &\quad+ 2 \int_{B_\rho \setminus B_\sigma} \frac{ | x^\perp|^2}{|x|^4} \,d\mu \\
  &\quad  +2( \min(\rho^{-2},1) -\min(\sigma^{-2},1) )\mu(\mathbb R^n)
\end{align*}
and
\begin{align*}
-\int  \vec H\cdot X \,d\mu 
&= - \int \vec H\cdot X_1 \,d\mu - \int \vec H\cdot X_2 \,d\mu \\
 & =-\sigma^{-2} \int_{B_\sigma} \vec H \cdot x\,d\mu + \rho^{-2} \int_{B_\rho} \vec H \cdot x\,d\mu\\
 &\quad - \int_{B_\rho \setminus B_\sigma} \vec H \cdot (|x|^{-2} x)\,d\mu\\
 &\quad  - ( \min(\rho^{-2},1) -\min(\sigma^{-2},1) )\int \vec H \cdot x \,d\mu.
\end{align*}
Using again \eqref{complete the square} we get
\begin{align*} 
2\int_{B_\rho \setminus B_\sigma}  \left| \frac{1 }{4}\vec H + \frac{x^\perp }{|x|^2} \right|^2  \,d\mu 
  & =2\rho^{-2} \mu (B_\rho)-2 \sigma^{-2} \mu (B_\sigma) +  \frac{1 }{8}  \int_{B_\rho \setminus B_\sigma}  |\vec H|^2 \,d\mu \\
    &\quad  -2( \min(\rho^{-2},1) - \min(\sigma^{-2},1) )\mu(\mathbb R^n)\\
   &\quad + \rho^{-2} \int_{B_\rho} \vec H \cdot x\,d\mu -\sigma^{-2} \int_{B_\sigma} \vec H \cdot x\,d\mu \\ 
 &\quad  - ( \min(\rho^{-2},1) -\min(\sigma^{-2},1) )\int \vec H \cdot x \,d\mu.
\end{align*}
In view of the definition of $g_0$ and $\hat g_0$, and equation \eqref{position vector} we may rewrite this as
\begin{align*}
\frac{1}{\pi}&\int_{B_\rho(0)\setminus B_\sigma(0)}  \left| \frac{1 }{4}\vec H + \frac{x^\perp }{|x|^2}\right|^2   \,d\mu = (g_{0}(\rho)+\hat g_{0}(\rho)) - (g_{0}(\sigma) +\hat g_{0}(\sigma)).
\end{align*}
This equality which was proved for a.e. $\sigma$ and $\rho$ is obviously also true for every $\sigma$ and $\rho$ by an approximation argument.
\end{proof}

\begin{prop}
For every $x_0 \in \mathbb R^n$ the tilde-density
\[
\widetilde \theta^2(\mu,x_0):=
\begin{cases}
\lim_{r \downarrow 0}\left( \frac{\mu(B_r(x_0))}{\pi r^2}+  \frac{\mu(\hat B_r(x_0))}{\pi (|x_0|^{-1}r)^2} \right)&,x_0 \neq 0,\\
\lim_{r \downarrow 0} \frac{\mu(B_r(0))}{\pi r^2}&
\end{cases}
\]
exists. Moreover, the function $x \mapsto \widetilde \theta^2(\mu,x)$ is upper semicontinuous in $\mathbb R^n$.
\end{prop}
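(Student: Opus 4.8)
The plan is to extract everything from the monotonicity identity of Theorem~\ref{thm:monotonicity}. Set $h_{x_0}(r):=g_{x_0}(r)+\hat g_{x_0}(r)$. By that theorem $h_{x_0}$ is non-decreasing on $(0,\infty)$, so $\lim_{r\downarrow0}h_{x_0}(r)$ exists in $[-\infty,\infty)$. Write $a(r):=\mu(B_r(x_0))/(\pi r^2)$ and, for $x_0\neq0$, $\hat a(r):=\mu(\hat B_r(x_0))/(\pi(|x_0|^{-1}r)^2)$ (and $\hat a\equiv0$ for $x_0=0$). I will carry out two steps. \emph{Step 1 (area bounds).} Estimating each summand of $g_{x_0}$ and $\hat g_{x_0}$ and using Young's inequality to absorb the ones comparable to the area ratios, I will show $h_{x_0}(r)\ge\tfrac12\bigl(a(r)+\hat a(r)\bigr)-C(x_0)$ for all small $r$; since $h_{x_0}(r)\le h_{x_0}(1)$ by monotonicity, this forces $\mu(B_r(x_0))\le C r^2$ and $\mu(\hat B_r(x_0))\le C(|x_0|^{-1}r)^2$ for $r$ small. \emph{Step 2 (the error terms vanish).} Using these area bounds, together with $\vec H\in L^2(\mu)$ and $\mu(\mathbb R^n)<\infty$, I will show every summand of $h_{x_0}$ other than $a(r)$ and $\hat a(r)$ tends to $0$ as $r\downarrow0$, i.e. $h_{x_0}(r)=a(r)+\hat a(r)+o(1)$. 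Since $a(r)+\hat a(r)\ge0$, this makes $\lim_{r\downarrow0}h_{x_0}(r)$ finite, and then $\widetilde\theta^2(\mu,x_0)=\lim_{r\downarrow0}(a(r)+\hat a(r))=\lim_{r\downarrow0}h_{x_0}(r)$ exists.

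Both steps are routine estimates. For instance $|\tfrac{1}{2\pi r^2}\int_{B_r(x_0)}\vec H\cdot(x-x_0)\,d\mu|\le\tfrac{1}{2\pi r}\|\vec H\|_{L^2(\mu)}\mu(B_r(x_0))^{1/2}$, which is $\le\tfrac14 a(r)+C\|\vec H\|_{L^2(\mu)}^2$ (Young) for Step~1 and $\le C(x_0)\|\vec H\|_{L^2(B_r(x_0))}\to0$ for Step~2; the summands of $\hat g_{x_0}$ carrying a factor $|x-\xi(x_0)|^2$ or $P_x(x-\xi(x_0))\cdot x$ over $\hat B_r(x_0)$ are, since there $|x-\xi(x_0)|\le|x_0|^{-1}r$ and $|x|\le2|\xi(x_0)|$, either bounded by $\mu(\mathbb R^n)/\pi$ or by $C(x_0)r\,\hat a(r)$ (absorbed for $r$ small), the $\vec H$-summands of $\hat g_{x_0}$ are $O(\|\vec H\|_{L^1(\hat B_r(x_0))})$, and $\mu(\hat B_r(x_0))/\pi\to0$ since a $2$-rectifiable measure has no atoms. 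For $x_0=0$ matters simplify: $\hat g_0(r)$ equals the constant $-\tfrac{1}{2\pi}\int x\cdot\eta\,d\sigma$ for $r\in(0,1)$ (as $B_r(0)\cap\partial B=\emptyset$), so $g_0$ is non-decreasing on $(0,1)$ and $\widetilde\theta^2(\mu,0)=\lim_{r\downarrow0}\mu(B_r(0))/(\pi r^2)$ is just the interior density, whose existence is classical (cf. \cite{MR2119722}).

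For upper semicontinuity, suppose first $x_0\neq0$. By the above, $\widetilde\theta^2(\mu,x)=\lim_{r\downarrow0}h_x(r)=\inf_{r>0}h_x(r)$ for every $x\neq0$, so $\widetilde\theta^2(\mu,x)\le h_x(r)$ for all $r>0$. Fix $\varepsilon>0$ and pick $r_0$ with $h_{x_0}(r_0)<\widetilde\theta^2(\mu,x_0)+\varepsilon$; shrinking $r_0$ slightly (which preserves this by monotonicity) we may also arrange $\mu(\partial B_{r_0}(x_0))=\mu(\partial\hat B_{r_0}(x_0))=0$, since these two families of spheres sweep out disjoint sets as $r_0$ varies, hence only countably many radii are bad. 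As $\xi$ is continuous away from $0$, the centres and radii of $B_{r_0}(x)$ and $\hat B_{r_0}(x)=B_{r_0/|x|}(\xi(x))$ depend continuously on $x$; therefore, by dominated convergence (using that $\mu$ charges neither $\partial B_{r_0}(x_0)$ nor $\partial\hat B_{r_0}(x_0)$ and that $|\vec H|,|\vec H|^2\in L^1(\mu)$), every measure and integral entering $h_x(r_0)$ converges to its value at $x_0$, so $h_x(r_0)\to h_{x_0}(r_0)$ as $x\to x_0$. Hence $\widetilde\theta^2(\mu,x)\le h_x(r_0)<\widetilde\theta^2(\mu,x_0)+\varepsilon$ for $x$ near $x_0$, and letting $\varepsilon\downarrow0$ gives $\limsup_{x\to x_0}\widetilde\theta^2(\mu,x)\le\widetilde\theta^2(\mu,x_0)$.

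Finally, at $x_0=0$: since $\Sigma={\rm spt}(\mu)$ is compact, ${\rm dist}(\xi(x),\Sigma)\to\infty$ as $x\to0$, so $\hat B_r(x)=B_{r/|x|}(\xi(x))$ misses $\Sigma$ — hence $\mu(\hat B_r(x))=0$ — for all $r\in(0,1)$ once $|x|$ is small enough; consequently $\widetilde\theta^2(\mu,x)=\lim_{r\downarrow0}\mu(B_r(x))/(\pi r^2)=\theta^2(\mu,x)$ for $x$ in a punctured neighbourhood of $0$, and also $\widetilde\theta^2(\mu,0)=\theta^2(\mu,0)$. By \eqref{1st variation Neumann} (test fields supported off $\partial B$) $\mu$ has generalized mean curvature in $L^2$ away from $\partial B$, so the ordinary density $\theta^2(\mu,\cdot)$ is upper semicontinuous at interior points (cf. \cite{MR2119722}; alternatively this follows by running the previous paragraph's argument with $r_0<1$, where $\widetilde\theta^2(\mu,x)\le g_x(r_0)$ and $g_x(r_0)\to g_0(r_0)$). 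This yields the claim at $0$. The main obstacle is Step~1: one must verify that \emph{all} the error terms of $\hat g_{x_0}$ — several of them with $1/r^2$ prefactors — can be estimated and that those comparable to the area ratios are absorbed with total coefficient strictly below $1$. A secondary subtlety, handled by the last paragraph, is that $h_x(r)$ is genuinely discontinuous at $x=0$, the boundary term $\hat g$ effectively disappearing as $x\to0$.
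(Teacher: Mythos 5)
Your proposal is correct. The existence part (your Steps 1--2) is essentially the paper's own argument: the paper isolates the same remainder $R(r)$, estimates it by Cauchy--Schwarz and Young exactly as in your displayed sample estimate (cf.\ \eqref{estimate of remainder} and \eqref{mono}), deduces the boundedness of the area ratios from the monotonicity of $g_{x_0}+\hat g_{x_0}$, concludes $R(r)\to 0$, and identifies $\widetilde\theta^2(\mu,x_0)=\lim_{r\downarrow0}(g_{x_0}(r)+\hat g_{x_0}(r))$; your absorption constants check out (in particular the $P_x$-term is indeed $O(r)\,\hat a(r)$ and the $|x-\xi(x_0)|^2$- and $\vec H\cdot(|x-\xi(x_0)|^2x)$-terms are bounded by constants). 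Where you genuinely diverge is the upper semicontinuity. The paper turns the existence argument into the quantitative density bound \eqref{densitybound} (and \eqref{densitybound at 0}), applies it at $x_j\to x_0$, passes to the limsup via $\limsup_j\mu(B_\rho(x_j))\le\mu(\overline B_\rho(x_0))$ (enlarging balls slightly), and then sends $\rho\downarrow0$, $\varepsilon\downarrow0$; you instead use $\widetilde\theta^2(\mu,x)\le h_x(r_0)$ for every radius and show $x\mapsto h_x(r_0)$ is continuous at $x_0$ for a good radius $r_0$ (spheres not charged by $\mu$), i.e.\ you exhibit the density as an infimum over good radii of functions continuous at the point. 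Your route is cleaner (no $\varepsilon$-bookkeeping, no ball enlargement) but requires convergence of the full monotone quantity including all $\vec H$-terms, which your dominated-convergence step legitimately provides; the paper's route needs only semicontinuity of $\mu$ on balls and crude remainder bounds, and its explicit bound \eqref{densitybound} is anyway reused as part (2) of Theorem \ref{thm:density}, so it is not wasted work. Your separate treatment of $x_0=0$ also differs from the paper's (which keeps one scheme, interpreting $\hat B_\rho(0)=\emptyset$) and is fine, except for a harmless quantifier slip: $\hat B_r(x)$ misses the compact support only for $r\le 1-d|x|$, not for all $r\in(0,1)$ uniformly once $|x|$ is small; since you only use a fixed $r_0<1$ and small $r$, the argument is unaffected.
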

\begin{rmk}
Since $\hat B_r(x_0)= B_r(x_0)$ for $x_0 \in \partial B$ we have that $\widetilde \theta^2(\mu, \cdot)=2\theta^2(\mu, \cdot)$ on $\partial B$.
\end{rmk}
\begin{proof}
Set, in case $x_0\neq 0$,
\begin{align*}
R(r)&:=  \frac{1}{2 \pi r^2}\int_{B_r} \vec H \cdot (x-x_0)\,d\mu
 +  \frac{1}{2 \pi ( |x_0|^{-1} r)^2}\int_{\hat B_r} \vec H \cdot (x-\xi(x_0)) \,d\mu \\
&\quad - \frac{1}{\pi(|x_0|^{-1} r)^2} \int_{\hat B_r} ( |x- \xi(x_0)|^2  +P_x(x- \xi(x_0)) \cdot x  )\,d\mu \\
&\quad -  \frac{1}{2 \pi (|x_0|^{-1} r)^2}\int_{\hat B_r} \vec H \cdot (  |x-\xi(x_0)|^{2}x)\,d\mu  .
\end{align*}
We estimate with H\"older's inequality
\begin{align}\label{estimate of remainder}
|R(r)|& \leq \left( \frac{\mu(B_r)}{ \pi r^2}\right)^\frac{1}{2}\left(\frac{1}{4 \pi }\int_{B_r} |\vec H|^2  \,d\mu \right)^\frac{1}{2} 
+ \left( \frac{\mu(\hat B_r)}{ \pi (|x_0|^{-1} r)^2}\right)^\frac{1}{2}\left(\frac{1}{4 \pi }\int_{\hat B_r} |\vec H|^2  \,d\mu \right)^\frac{1}{2} \nonumber\\
&\quad + \frac{\mu(\hat B_r)}{\pi }  +d \left(  \frac{\mu(\hat B_r)  }{ \pi (|x_0|^{-1} r)^2} \right)^\frac{1}{2}\left(  \frac{  \mu(\hat B_r)  }{ \pi } \right)^\frac{1}{2} \nonumber\\
&\quad + d \left( \frac{\mu(\hat B_r)}{ \pi}\right)^\frac{1}{2}\left(\frac{1}{4 \pi }\int_{\hat B_r} |\vec H|^2  \,d\mu \right)^\frac{1}{2},
\end{align}
where $d:=\sup\{|x|:x \in \Sigma\}$.
Moreover, for $\varepsilon > 0$
\begin{align*}
|R(r)|& \leq \varepsilon \frac{\mu(B_r)}{ \pi r^2}+ \frac{1}{16 \pi \varepsilon }\int_{B_r} |\vec H|^2  \,d\mu 
+ \varepsilon \frac{\mu(\hat B_r)}{ \pi (|x_0|^{-1} r)^2} + \frac{1}{16 \pi \varepsilon}\int_{\hat B_r} |\vec H|^2  \,d\mu\\
&\quad + \frac{\mu(\hat B_r)}{\pi }  + \varepsilon \frac{\mu(\hat B_r)  }{ \pi (|x_0|^{-1} r)^2} + \frac{1}{4 \varepsilon} d^2 \frac{  \mu(\hat B_r)  }{ \pi }  + \frac{1}{4 \pi }\int_{\hat B_r} |\vec H|^2  \,d\mu + d^2 \frac{\mu(\hat B_r)}{ 4\pi}.
\end{align*}
On the other hand, we have
\begin{align*}
 \frac{\mu(B_\sigma )}{\pi \sigma^2}  +  \frac{\mu(\hat B_\sigma )}{\pi (|x_0|^{-1}\sigma)^2} &\leq
  \frac{\mu(B_\rho )}{\pi \rho^2}  + \frac{\mu(\hat B_\rho )}{\pi (|x_0|^{-1}\rho)^2} + \frac{1}{16 \pi}\int_{(B_\rho \cup \hat B_\rho) \setminus (B_\sigma \cup \hat B_\sigma ) }  |\vec H|^2  \,d\mu \\
&\quad + \frac{1}{2\pi} \int_{\hat B_\rho \setminus \hat B_\sigma }  \vec H\cdot x\,d\mu    + \frac{\mu( \hat B_\rho \setminus \hat B_\sigma)  }{\pi} + R(\rho)- R(\sigma) .
\end{align*}
Using \eqref{estimate of remainder} and
\[
 \int_{\hat B_\rho \setminus \hat B_\sigma }  \vec H\cdot x\,d\mu \leq \frac{1}{4} \int_{\hat B_\rho }  |\vec H |^2\,d\mu+ d^2 \mu(\hat B_\rho) ,
 \]
we infer, upon redefining $0<\varepsilon<1$, that
\begin{align}\label{mono}
 \frac{\mu(B_\sigma )}{\pi \sigma^2} & +  \frac{\mu(\hat B_\sigma )}{\pi (|x_0|^{-1}\sigma)^2} \leq
 (1+\varepsilon) \left( \frac{\mu(B_\rho )}{\pi \rho^2}  + \frac{\mu(\hat B_\rho )}{\pi (|x_0|^{-1}\rho)^2} \right) \nonumber\\
& \quad +C(\varepsilon) \int_{B_\rho }  |\vec H|^2 \,d\mu  + C(\varepsilon) \int_{ \hat B_{\rho}} |\vec H|^2 \,d\mu  \nonumber\\
&\quad+  C(\varepsilon)\left(1  + d^2  \right)  \mu(\hat B_{\rho}) .
\end{align}
We infer that
\[
\limsup_{\sigma \downarrow 0} \left(  \frac{\mu(B_\sigma )}{\pi \sigma^2} +  \frac{\mu(\hat B_\sigma )}{\pi (|x_0|^{-1}\sigma)^2}  \right) <\infty ,
\]
and in view of \eqref{estimate of remainder} that
\[
\lim_{r \downarrow 0} |R(r)| =0. 
\]
Theorem \ref{thm:monotonicity} implies that the tilde-density $\widetilde \theta^2(\mu,x_0)$ exists, and that
\[
\widetilde \theta^2(\mu,x_0) = \lim_{\sigma \downarrow 0}(g_{x_0}(\sigma) +\hat g_{x_0}(\sigma) ). 
\]
Hence also
\begin{align}\label{densitybound}
\widetilde \theta^2(\mu,x_0) &\leq  (1+\varepsilon) \left( \frac{\mu(B_\rho )}{\pi \rho^2}  + \frac{\mu(\hat B_\rho )}{\pi (|x_0|^{-1}\rho)^2} \right) \nonumber\\
& \quad +C(\varepsilon) \int_{B_\rho }  |\vec H|^2 \,d\mu  + C(\varepsilon) \int_{ \hat B_{\rho}} |\vec H|^2 \,d\mu  \nonumber\\
&\quad+  C(\varepsilon)\left(1 + d^2  \right)  \mu(\hat B_{\rho}) .
\end{align}

Now, assume $x_0 =0$, then set
\begin{align*}
R(r)&:=  \frac{1}{2 \pi r^2}\int_{B_r} \vec H \cdot x\,d\mu ,
\end{align*}
and we have that
\begin{align}\label{estimate of remainder at 0}
|R(r)|& \leq \left( \frac{\mu(B_r)}{ \pi r^2}\right)^\frac{1}{2}\left(\frac{1}{4 \pi }\int_{B_r} |\vec H|^2  \,d\mu \right)^\frac{1}{2} 
\end{align}
and for $\varepsilon > 0$
\begin{align*}
|R(r)|& \leq \varepsilon \frac{\mu(B_r)}{ \pi r^2}+ \frac{1}{16 \pi \varepsilon }\int_{B_r} |\vec H|^2  \,d\mu .
\end{align*}
Hence,
\begin{align*}
 \frac{\mu(B_\sigma )}{\pi \sigma^2}  \leq  (1+\varepsilon) \frac{\mu(B_\rho )}{\pi \rho^2}   + C(\varepsilon) \int_{B_\rho }  |\vec H|^2\,d\mu    +C(\varepsilon) (1 - \min(\rho^{-2},1)  )\,\sigma(\partial B),
\end{align*}
where we used that ${\rm spt}(\sigma) \subset \partial B$.
We infer that
\[
\limsup_{\sigma \downarrow 0}   \frac{\mu(B_\sigma )}{\pi \sigma^2} <\infty ,
\]
and in view of \eqref{estimate of remainder at 0} that
\[
\lim_{r \downarrow 0} |R(r)| =0. 
\]
Theorem \ref{thm:monotonicity} implies that the density $ \theta^2(\mu,0)$ exists, and that
\[
\theta^2(\mu, 0) = \lim_{\sigma \downarrow 0} g_{0}(\sigma) , 
\]
where we used that $\hat g_{0}(r)\equiv -\frac{1}{2\pi} \int x\cdot \eta\,d\sigma$ for all $0<r\leq1$. 
Hence also
\begin{align}\label{densitybound at 0}
\widetilde \theta^2(\mu,0)=\theta^2(\mu,0) &\leq  (1+\varepsilon)  \frac{\mu(B_\rho )}{\pi \rho^2}    +C(\varepsilon) \int_{B_\rho }  |\vec H|^2 \,d\mu \nonumber\\
& \quad +C(\varepsilon) (1 - \min(\rho^{-2},1)  )\,\sigma(\partial B).
\end{align}
Now, let $x_j$ be a sequence in $\mathbb R^n$ such that $x_j \to x_0$. Then \eqref{densitybound} and \eqref{densitybound at 0} with $x_0$ replaced by $x_j$ implies
\begin{align*}
&\frac{\mu(\overline B_\rho )}{\pi \rho^2}+\frac{\mu(\overline{ \hat B}_\rho )}{\pi (|x_0|^{-1}\rho)^2}
 \geq \limsup_{j \to \infty} \left(\frac{\mu(B_\rho(x_j) )}{\pi \rho^2} +\frac{\mu(\hat B_\rho(x_j) )}{\pi (|x_j|^{-1}\rho)^2} \right)\\
& \geq \frac{1}{ 1+ \varepsilon} \limsup_{j \to \infty} \bigg( \widetilde\theta^2(\mu,x_j)-  C(\varepsilon)\int_{B_\rho(x_j)\cup \hat B_\rho(x_j)}  |\vec H|^2  \,d\mu \\
&\quad \quad- C(\varepsilon)(1 +d^2) \mu(\hat B_\rho(x_j))- C(\varepsilon) (1 - \min(\rho^{-2},1)  )\,\sigma(\partial B). \bigg) \\
& \geq \frac{1}{ 1+ \varepsilon}  \bigg( \limsup_{j \to \infty}\widetilde \theta^2(\mu,x_j)  -C(\varepsilon) \int_{B_{2\rho} (x_0) \cup \hat B_{2\rho}(x_0)}  |\vec H|^2 \,d\mu  \\
&\quad  -  C(\varepsilon)\left(1+d^2\right)  \mu(\hat B_{2\rho}(x_0)) -C(\varepsilon) (1 - \min(\rho^{-2},1)  )\,\sigma(\partial B). \bigg) ,
\end{align*}
where we interpret $\hat B_r(0) = \emptyset$ and $\frac{\mu(\overline{ \hat B}_\rho(0) )}{\pi (|0|^{-1}\rho)^2}=0$.
Letting $\rho \downarrow 0$ and then $\varepsilon  \downarrow 0$ implies the upper semicontinuity.
\end{proof}
\noindent
Since $\Sigma $ is compact we may estimate
\begin{align*}
|R(r)|& \leq  \frac{1}{2 \pi r}  \mu(B_r)^\frac{1}{2}\left(\int_{B_r} |\vec H|^2  \,d\mu \right)^\frac{1}{2}  +\frac{C(d,|x_0|)}{r^2} \mu (\hat B_r)\\
&\quad +  \frac{C(d,|x_0|)}{r^2}  \mu(\hat B_r)^\frac{1}{2}\left(\int_{\hat B_r} |\vec H|^2  \,d\mu \right)^\frac{1}{2} .
\end{align*}
Hence,
\[
\lim_{r\to \infty} |R(r)| =0.
\]
Also, by \eqref{1st variation Neumann with boundary term} and \eqref{position vector},
\begin{align*}
\lim_{r\to \infty}( g_{x_0}(r) +\hat g_{x_0}(r) )
&=\frac{1}{8 \pi}\int |\vec H|^2  \,d\mu   + \frac{1}{2\pi} \int  \vec H\cdot x\,d\mu  + \frac{\mu( \mathbb R^n)  }{\pi}\\
&=  \frac{1}{8 \pi}\int  |\vec H|^2  \,d\mu + \frac{1}{2\pi}\int  x \cdot \eta\,d\sigma
\end{align*}
for $x_0\neq 0$, and
\begin{align*}
\lim_{r\to \infty}( g_{0}(r) +\hat g_{0}(r) )
&=\frac{1}{16 \pi}\int |\vec H|^2  \,d\mu .
\end{align*}
Summarizing, we have proved the following theorem.
\begin{theorem}\label{thm:density}
For every $x_0 \in \mathbb R^n$ the tilde-density
\[
\widetilde \theta^2(\mu,x_0):=
\begin{cases}
\lim_{r \downarrow 0}\left( \frac{\mu(B_r(x_0))}{\pi r^2}+  \frac{\mu(\hat B_r(x_0))}{\pi (|x_0|^{-1}r)^2} \right)&,x_0 \neq 0,\\
\lim_{r \downarrow 0} \frac{\mu(B_r(0))}{\pi r^2}&
\end{cases}
\]
exists. The function $x \mapsto \widetilde \theta^2(\mu,x)$ is upper semicontinuous. Moreover, we have for all $0 < \sigma <\rho <\infty$
\begin{enumerate}
\item {\bf{(area bound)}}\[
\begin{cases}
\sigma^{-2}\mu(B_\sigma(x_0) ) + (\sigma/|x_0|)^{-2}\mu(\hat B_\sigma(x_0) ) \leq  C &,x_0 \neq 0,\\
\sigma^{-2} \mu(B_\sigma(0) )  \leq  C &,
\end{cases}
\]
for $C=C(d,\mu(\mathbb R^n),\|\vec H\|_{L^2})$,
\item {\bf{(density bound)}} \begin{align*}
\widetilde \theta^2(\mu,x_0) &\leq (1+\varepsilon) \frac{\mu(B_\rho(x_0) )}{\pi \rho^2}  + (1+\varepsilon)\frac{\mu(\hat B_\rho(x_0) )}{\pi (|x_0|^{-1}\rho)^2} \\
& \quad +C(\varepsilon) \int_{B_\rho(x_0) }  |\vec H|^2 \,d\mu  + C(\varepsilon) \int_{ \hat B_{\rho}(x_0)} |\vec H|^2 \,d\mu  \\
&\quad+  C(\varepsilon)\left(1 + d^2 \right)  \mu(\hat B_{\rho}(x_0)) 
\end{align*}
and
\[
 \theta^2(\mu,0)  \leq  (1+\varepsilon) \frac{\mu(B_\rho )}{\pi \rho^2}   + C(\varepsilon) \int_{B_\rho }  |\vec H|^2\,d\mu    +C(\varepsilon) ( 1 -\min(\rho^{-2},1)  )\,\sigma(\partial B),
 \]
and
\item {\bf{(integral identity)}}
\begin{align*}
\frac{1}{\pi}&\int  \left| \frac{1 }{4}\vec H + \frac{(x-x_0)^\perp }{|x-x_0|^2}\right|^2   \,d\mu 
+ \frac{1}{\pi}\int \left| \frac{1 }{4}\vec H + \frac{(x-\xi(x_0))^\perp }{|x-\xi(x_0)|^2}\right|^2   \,d\mu \\
&= \frac{1}{8 \pi}\int  |\vec H|^2  \,d\mu + \frac{1}{2\pi} \int  x\cdot \eta \,d\sigma - \widetilde \theta^2(\mu,x_0)\quad\quad\text{for $x_0\neq  0$,}
\end{align*}
and 
\begin{align*}
\frac{1}{\pi}\int & \left| \frac{1 }{4}\vec H + \frac{x^\perp }{|x|^2}\right|^2   \,d\mu = \frac{1}{16 \pi}\int  |\vec H|^2  \,d\mu + \frac{1}{2\pi} \int  x\cdot \eta \,d\sigma   -  \theta^2(\mu,0).
\end{align*}
\end{enumerate}
\end{theorem}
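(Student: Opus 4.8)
The plan is to recover the statement from results already in hand and to supply the one new ingredient, the area bounds. The existence of $\widetilde\theta^2(\mu,\cdot)$ at every point and its upper semicontinuity are exactly the preceding Proposition, and the density bounds in item (2) are the inequalities \eqref{densitybound} and \eqref{densitybound at 0} obtained in its proof. So only items (1) and (3) need an argument, and both are limiting cases of the monotonicity identity of Theorem \ref{thm:monotonicity}.

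For the area bound when $x_0\neq0$ I would fix $\varepsilon=1$ in \eqref{mono} and let $\rho\to\infty$: since $\mu$ is a finite Radon measure supported in $\overline{B_d(0)}$ we have $\mu(B_\rho(x_0))/(\pi\rho^2)\to0$, $\mu(\hat B_\rho(x_0))/(\pi(|x_0|^{-1}\rho)^2)=|x_0|^2\mu(\hat B_\rho(x_0))/(\pi\rho^2)\to0$, $\int_{B_\rho(x_0)}|\vec H|^2\,d\mu\to\|\vec H\|_{L^2}^2$, $\int_{\hat B_\rho(x_0)}|\vec H|^2\,d\mu\to\|\vec H\|_{L^2}^2$ and $\mu(\hat B_\rho(x_0))\to\mu(\mathbb R^n)$, so the right-hand side of \eqref{mono} stays bounded by a constant $C(d,\mu(\mathbb R^n),\|\vec H\|_{L^2})$. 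For $x_0=0$ this naive limit leaves a term proportional to $\sigma(\partial B)=\mathcal H^1(\partial\Sigma)$, which is not controlled by $d$, $\mu(\mathbb R^n)$, $\|\vec H\|_{L^2}$ alone; this is the one real obstacle. I would circumvent it using the $x_0=0$ monotonicity identity directly: nonnegativity of its left-hand side gives, for $\sigma<1$ (where $\hat g_0(\sigma)\equiv-\tfrac{1}{2\pi}\int x\cdot\eta\,d\sigma$), that $g_0(\sigma)+\hat g_0(\sigma)\leq\lim_{\rho\to\infty}(g_0(\rho)+\hat g_0(\rho))=\tfrac{1}{16\pi}\|\vec H\|_{L^2}^2$, which after discarding the nonnegative term $\tfrac{1}{16\pi}\int_{B_\sigma}|\vec H|^2\,d\mu$ rearranges to
\[
\frac{\mu(B_\sigma)}{\pi\sigma^2}\leq\frac{1}{16\pi}\|\vec H\|_{L^2}^2+\frac{1}{2\pi}\int x\cdot\eta\,d\sigma-\frac{1}{2\pi\sigma^2}\int_{B_\sigma}\vec H\cdot x\,d\mu .
\]
Here $\int x\cdot\eta\,d\sigma=2\mu(\mathbb R^n)+\int\vec H\cdot x\,d\mu\leq2\mu(\mathbb R^n)+d\,\|\vec H\|_{L^2}\mu(\mathbb R^n)^{1/2}$ by \eqref{position vector}, and, using $|x|\leq\sigma$ on $B_\sigma(0)$, the last term is at most $\tfrac{1}{2\pi\sigma}\|\vec H\|_{L^2}\mu(B_\sigma)^{1/2}$, which Young's inequality absorbs into $\tfrac{1}{2}\,\mu(B_\sigma)/(\pi\sigma^2)$ up to an additive multiple of $\|\vec H\|_{L^2}^2$. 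This proves the bound for $\sigma<1$; for $\sigma\geq1$ one simply uses $\mu(B_\sigma)\leq\mu(\mathbb R^n)$.

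For item (3) I would let $\sigma\downarrow0$ and $\rho\to\infty$ in \eqref{monotonicity identity}. On the right, $g_{x_0}(\sigma)+\hat g_{x_0}(\sigma)\to\widetilde\theta^2(\mu,x_0)$ by the Proposition (when $x_0=0$ this limit is $\theta^2(\mu,0)-\tfrac{1}{2\pi}\int x\cdot\eta\,d\sigma$, again using $\hat g_0\equiv-\tfrac{1}{2\pi}\int x\cdot\eta\,d\sigma$ on $(0,1]$), while $g_{x_0}(\rho)+\hat g_{x_0}(\rho)\to\tfrac{1}{8\pi}\|\vec H\|_{L^2}^2+\tfrac{1}{2\pi}\int x\cdot\eta\,d\sigma$ (respectively $\tfrac{1}{16\pi}\|\vec H\|_{L^2}^2$ for $x_0=0$) by the computation carried out immediately before this theorem. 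On the left the two integrands are nonnegative and the annuli $B_\rho(x_0)\setminus B_\sigma(x_0)$ and $\hat B_\rho(x_0)\setminus\hat B_\sigma(x_0)$ increase to $\mathbb R^n\setminus\{x_0\}$ and $\mathbb R^n\setminus\{\xi(x_0)\}$; the area bound of item (1) forces $\mu(\{x_0\})=\mu(\{\xi(x_0)\})=0$, so monotone convergence identifies the limit of the left-hand side with the corresponding integrals over all of $\mathbb R^n$. Equating the two limits gives the stated identities, and collecting the Proposition, the area bounds, and these limits completes the proof; the only nonroutine point is the elimination of $\sigma(\partial B)$ in the centre case of item (1) carried out above.
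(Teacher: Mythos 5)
Your proposal is correct and follows essentially the paper's own route: the theorem is assembled from the preceding Proposition (existence of $\widetilde\theta^2$, upper semicontinuity, and the bounds \eqref{densitybound}, \eqref{densitybound at 0}) together with the $\rho\to\infty$ computations of $R(r)$ and $g_{x_0}+\hat g_{x_0}$ given just before the statement, the area bound and integral identity being limiting cases ($\rho\to\infty$, $\sigma\downarrow 0$) of Theorem \ref{thm:monotonicity}. Your workaround for the $x_0=0$ area bound fills in a detail the paper leaves implicit, and is fine; note it can also be obtained by simply taking $\rho\le 1$ in the paper's displayed inequality, since the coefficient $\left(1-\min(\rho^{-2},1)\right)$ of $\sigma(\partial B)$ then vanishes.
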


\section{Applications}\label{applications}
The Willmore energy $\mathcal W(F)$ of a smooth immersed compact orientable surface $F: \Sigma \to \mathbb R^n$ with boundary $\partial \Sigma$ is given by
\[
\mathcal W(F):= \frac{1}{4} \int_\Sigma H^2\,d\mathcal H_{F^*\delta}^2+ \int_{\partial \Sigma}\kappa_g\,d\mathcal H_{F^*\delta}^1,
\]
where $\kappa_g$ denotes the geodesic curvature of $\partial \Sigma$ as a submanifold of $\Sigma$ (cf. \cite{MR2592972}). By the Gauss equations and the Gauss-Bonnet theorem we have that
\[
\mathcal W(F)  = \frac{1}{2} \int_\Sigma |A^\circ |^2\,d\mathcal H_{F^*\delta}^2+ 2\pi\chi(\Sigma),
\]
where $A^\circ$ denotes the tracefree part of the second fundamental form, and $\chi(\Sigma)$ denotes the Euler characteristic of $\Sigma$. Since  $\chi(\Sigma)= 2-2g(\Sigma) - r(\Sigma)$, $g(\Sigma)=$ genus of $\Sigma$, $r(\Sigma)=$ number of boundary components of $\Sigma$, we have that
\[
\mathcal W(F) \geq 2\pi
\]
for topological disks. 
For free boundary surfaces with respect to the unit ball we have that 
\[
 \kappa_g = D_{\tau} \eta\cdot \tau =  D_{\tau} (\eta \cdot x \,x) \cdot \tau = x \cdot \eta,\quad(\tau \in T(\partial \Sigma), |\tau|=1)
 \]
hence the Willmore energy may be rewritten as
\[
\mathcal W(F)  = \frac{1}{4} \int_\Sigma |\vec H |^2\,d\mathcal H_{F^*\delta}^2+ \int_{\partial \Sigma} x \cdot \eta\,d\mathcal H_{F^*\delta}^1.
\]
Motivated by the smooth case we may define the Willmore energy $\mathcal W(\mu)$ of a free boundary varifold $\mu$ with respect to the unit ball by
\[
\mathcal W(\mu)  = \frac{1}{4} \int |\vec H |^2\,d\mu+ \int x \cdot \eta\,d\sigma.
\]

\begin{theorem}\label{LiYau}
For any immersion $F : \Sigma \to\mathbb R^n$ of a compact free boundary surface with respect to the unit ball in $\mathbb R^n$ and the image varifold $\mu=\theta \mathcal H^2 \llcorner F(\Sigma)$, where $\theta(x)= \mathcal H^0(F^{-1}(\{x\}))$, we have
\[
\mathcal H^0(F^{-1}(\{x,\xi(x)\})) = \widetilde \theta^2(\mu,x)\leq \frac{1}{2\pi}\mathcal W(F),
\]
in particular
\begin{equation}\label{Willmore Inequality}
W(F) \geq 2\pi,
\end{equation}
and if
\[
 W(F) < 4\pi,
 \]
then $F$ is an embedding. Moreover, equality in \eqref{Willmore Inequality} implies that $F$ parametrizes a round spherical cap or a flat unit disk.
\end{theorem}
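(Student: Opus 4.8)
The strategy is to read off all assertions from the integral identity of Theorem~\ref{thm:density} applied to $\mu$. First, $\mu=\theta\,\mathcal H^2\llcorner F(\Sigma)$ is a free boundary varifold in the sense of Section~\ref{setting}: it is integer rectifiable with compact support, $\vec H_\mu$ is bounded and compactly supported hence in $L^2(\mu)$, one has $\mu(\partial B)=0$ because $F(\Sigma)\cap\partial B=F(\partial\Sigma)$ is $\mathcal H^2$-null, and \eqref{1st variation Neumann} follows by summing the first variation over the sheets of $F$ and using that the free boundary conormals are $\pm\gamma$, so boundary terms vanish when $X\cdot\gamma=0$ on $\partial B$. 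Rewriting $\frac{1}{8\pi}\int|\vec H|^2\,d\mu+\frac{1}{2\pi}\int x\cdot\eta\,d\sigma=\frac{1}{2\pi}\mathcal W(\mu)$, the integral identity of Theorem~\ref{thm:density} becomes, for $x_0\neq0$,
\[
\frac{1}{\pi}\int\left|\frac{1}{4}\vec H+\frac{(x-x_0)^\perp}{|x-x_0|^2}\right|^2d\mu+\frac{1}{\pi}\int\left|\frac{1}{4}\vec H+\frac{(x-\xi(x_0))^\perp}{|x-\xi(x_0)|^2}\right|^2d\mu=\frac{1}{2\pi}\mathcal W(\mu)-\widetilde\theta^2(\mu,x_0),
\]
and $\frac{1}{\pi}\int\big|\frac{1}{4}\vec H+x^\perp/|x|^2\big|^2\,d\mu=\frac{1}{2\pi}\mathcal W(\mu)-\theta^2(\mu,0)$ for $x_0=0$. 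Since the left-hand sides are nonnegative, $\widetilde\theta^2(\mu,x_0)\le\frac{1}{2\pi}\mathcal W(\mu)$ for every $x_0\in\mathbb R^n$.

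Two identifications complete the first line of the theorem. For the density: if $y\notin F(\partial\Sigma)$ then a neighbourhood of $y$ in $F(\Sigma)$ is a finite union of smooth sheets, so $\theta^2(\mu,y)=\mathcal H^0(F^{-1}(y))$; if $y\in F(\partial\Sigma)\subset\partial B$ then $F(\Sigma)$ near $y$ is a union of half-disks and $\theta^2(\mu,y)=\tfrac12\mathcal H^0(F^{-1}(y))$. Since $\xi$ fixes $\partial B$ pointwise while interchanging $B\setminus\{0\}$ and $\mathbb R^n\setminus\overline B$, and $F(\Sigma)\subset\overline B$, the definition of $\widetilde\theta^2$ gives $\widetilde\theta^2(\mu,x)=\mathcal H^0(F^{-1}(\{x,\xi(x)\}))$ in all cases (with the convention $\xi(0):=\infty$); in particular $\widetilde\theta^2(\mu,\cdot)$ is a nonnegative integer. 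For the energies, the area formula gives $\mathcal W(\mu)\le\mathcal W(F)$: away from a null set the Willmore integrands of $\mu$ and of $F$ agree sheet by sheet, while on the set where several sheets share a tangent plane the generalized mean curvature of $\mu$ is their multiplicity average and Cauchy--Schwarz applies, and the boundary integrals agree since the free boundary conormal is $\gamma$. Hence $\mathcal H^0(F^{-1}(\{x,\xi(x)\}))=\widetilde\theta^2(\mu,x)\le\frac{1}{2\pi}\mathcal W(F)$.

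For $x_0\in F(\Sigma)$ we have $\widetilde\theta^2(\mu,x_0)\ge\mathcal H^0(F^{-1}(x_0))\ge1$, so $\mathcal W(F)\ge2\pi$. If in addition $\mathcal W(F)<4\pi$ then $\widetilde\theta^2(\mu,x)<2$ for all $x$; being integer-valued, $\widetilde\theta^2(\mu,\cdot)\le1$, so $F$ is injective, and an injective immersion of a compact manifold with boundary is an embedding.

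The rigidity statement is the crux. If $\mathcal W(F)=2\pi$ then $F$ is an embedding by the previous step, so $\mu=\mathcal H^2\llcorner F(\Sigma)$ with $\vec H=\vec H_F$, and $\widetilde\theta^2(\mu,x_0)=1$ for every interior $x_0\in F(\Sigma)$; the identity above then forces both integrals on its left to vanish, so
\[
\frac{1}{4}\vec H(x)=-\frac{(x-x_0)^\perp}{|x-x_0|^2}\qquad\text{for all }x,x_0\in F(\Sigma)\text{ with }x\neq x_0.
\]
Fixing $x$ and letting $x_0\to x$ along $F(\Sigma)$ in a unit tangent direction $\tau$, a second order Taylor expansion of $F$ gives $(x-x_0)^\perp/|x-x_0|^2\to-\tfrac12 A(\tau,\tau)$ with $A$ the second fundamental form at $x$, whence $\vec H=2A(\tau,\tau)$ for every unit $\tau$; by polarization $A=\tfrac12\vec H\,\langle\cdot,\cdot\rangle$, so $F(\Sigma)$ is totally umbilic, hence an open subset of a round $2$-sphere or of an affine $2$-plane. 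Finally, the free boundary condition on $\partial\Sigma\subset\partial B$ makes the conormal equal to $\pm\gamma$, hence radial and tangent to $F(\Sigma)$; this forces an affine plane to pass through the origin, so that $F(\Sigma)$ is a flat unit disk, and forces a sphere of center $c$ and radius $\rho$ to satisfy $|c|^2=\rho^2+1$ and $\langle x,c\rangle=1$ on $\partial\Sigma$, so that $F(\Sigma)$ is a round spherical cap meeting $\partial B$ orthogonally. I expect the two genuinely technical points to be the comparison $\mathcal W(\mu)\le\mathcal W(F)$ for a general immersion and the limiting argument producing umbilicity.
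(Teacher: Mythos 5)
Your proposal is correct, and for the inequality part it follows the same route as the paper: everything is read off from the integral identity of Theorem \ref{thm:density}. The paper dispatches this with one sentence, so your added details -- checking that the image varifold satisfies \eqref{1st variation Neumann}, identifying $\widetilde\theta^2(\mu,x)$ with $\mathcal H^0(F^{-1}(\{x,\xi(x)\}))$ via interior densities and half-densities at boundary points, and the comparison $\mathcal W(\mu)\leq \mathcal W(F)$ by Cauchy--Schwarz on overlapping sheets -- are welcome (note that your preimage identification uses $F(\Sigma)\subset\overline B$, which matches the intended setting). Where you genuinely diverge is the equality case. The paper reduces it to Proposition \ref{equalitycase}, a varifold-level rigidity statement: from $\frac{1}{4}\vec H(x)=-(x-x_0)^{\perp}/|x-x_0|^{2}$ it deduces $|\vec H|\in L^\infty(\mu)$, invokes Allard and Gr\"uter--Jost regularity to get a $C^{1,\alpha}$ surface, and then splits into the minimal case (the identity places $\partial\Sigma$ in a $2$-plane, and the maximum principle gives the flat unit disk) and the non-minimal case (the identity evaluated at one point with $\vec H(x_0)\neq0$ forces $\Sigma$ onto an explicit $2$-sphere inside an affine $\mathbb R^3$). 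You instead exploit that equality already forces $F$ to be a smooth embedding, so no regularity theory is needed: letting the center tend to $x$ along a tangent direction gives $A(\tau,\tau)=\frac{1}{2}\vec H$ for every unit $\tau$, hence total umbilicity, and the classical classification of connected totally umbilic surfaces (Codazzi makes $\vec H$ parallel) together with the free boundary condition identifies the spherical cap or the flat unit disk. Your route is more elementary for the theorem as stated; the paper's route buys the stronger varifold statement, which it reuses in Section \ref{general support surface}. Two points you should make explicit: the pointwise identity holds a priori only for $\mu$-a.e.\ $x$ for each fixed center and is upgraded to all $x\neq x_0$ by continuity (the paper makes the analogous remark before its equation for the non-minimal case), and connectedness of $\Sigma$ (needed so that umbilicity yields a single sphere or plane) should be noted, e.g.\ because a second component would contribute at least an additional $2\pi$ to the energy.
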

\begin{proof}
The inequalities follow from Theorem \ref{thm:density}.
Assume now equality in \eqref{Willmore Inequality} holds. In particular, we have that $F$ is an embedding, and we may identify $\Sigma$ with $F(\Sigma)$. The proof now follows from Proposition \ref{equalitycase} below.
\end{proof}
\begin{rmk}
The estimate is sharp, as can be seen by taking the union of two distinct free boundary flat disks.\\
It is also interesting to note that in case $0 \in \Sigma$ we have the stronger inequality
\[
 2\pi  \theta^2(\mu,0) +\frac{1}{8}\int |\vec H|^2\,d\mu\leq \mathcal W(\mu).
 \]
\end{rmk}
\begin{prop}\label{equalitycase}
Let $\mu \neq 0$ be a compact integer rectifiable free boundary $2$-varifold with respect to $\partial B$ such that
\[
\mathcal W(\mu) = 2\pi.
\]
Then $\mu =\mathcal H^2\llcorner \Sigma$, where $\Sigma$ is a spherical cap or a flat unit disk.
\end{prop}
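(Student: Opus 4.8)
The plan is to feed the hypothesis $\mathcal W(\mu)=2\pi$ into the equality case of the integral identity in Theorem~\ref{thm:density}, extract a rigid pointwise relation for $\vec H$, and then identify $\Sigma$ by a moving-base-point argument combined with the constancy theorem. Since $\mathcal W(\mu)=2\pi$ we have $\tfrac1{8\pi}\int|\vec H|^2\,d\mu+\tfrac1{2\pi}\int x\cdot\eta\,d\sigma=\tfrac1{2\pi}\mathcal W(\mu)=1$, so for every $x_0\neq 0$ the integral identity of Theorem~\ref{thm:density} becomes
\[
\tfrac1\pi\int\Bigl|\tfrac14\vec H+\tfrac{(x-x_0)^\perp}{|x-x_0|^2}\Bigr|^2\,d\mu+\tfrac1\pi\int\Bigl|\tfrac14\vec H+\tfrac{(x-\xi(x_0))^\perp}{|x-\xi(x_0)|^2}\Bigr|^2\,d\mu=1-\widetilde\theta^2(\mu,x_0),
\]
whence $\widetilde\theta^2(\mu,x_0)\le 1$ for all $x_0\neq 0$. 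Off a $\mu$-null set we have $x_0\notin\partial B$, $x_0\neq 0$, and, by rectifiability, $\theta^2(\mu,x_0)\in\mathbb Z_{\ge1}$, together with $\widetilde\theta^2(\mu,x_0)=\theta^2(\mu,x_0)+\theta^2(\mu,\xi(x_0))\ge\theta^2(\mu,x_0)$; hence for such $x_0$ one gets $\theta^2(\mu,x_0)=1=\widetilde\theta^2(\mu,x_0)$, so that the right-hand side above vanishes. As the set of such $x_0$ is $\mu$-conull it is dense in $\Sigma\setminus\partial B$ (which is nonempty, since $\Sigma\subseteq\partial B$ would give $\mu=0$); choosing a countable dense subset and using continuity of $x_0\mapsto(x-x_0)^\perp/|x-x_0|^2$ in $x_0\neq x$, we conclude that for $\mu$-a.e.\ $x$
\begin{equation*}
\tfrac14\vec H(x)=-\frac{(x-x_0)^\perp}{|x-x_0|^2}\qquad\text{for every }x_0\in\Sigma\setminus\partial B.
\tag{$\ast$}
\end{equation*}

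Now fix $x$ satisfying $(\ast)$ and possessing an approximate tangent plane $T_x\mu$. If $\vec H(x)=0$ for some such $x$, then $(\ast)$ forces $x-x_0\in T_x\mu$ for every $x_0\in\Sigma\setminus\partial B$, so $\Sigma\setminus\partial B$ lies in the affine $2$-plane $x+T_x\mu$. Otherwise $\vec H(x)\neq 0$ at every admissible $x$: then $(\ast)$ says $(x-x_0)^\perp$ is always a scalar multiple of the normal vector $\vec H(x)$, so $x-x_0\in W_x:=T_x\mu\oplus\mathbb R\vec H(x)$ --- a $3$-plane, by Brakke's orthogonality theorem --- for every $x_0$; and squaring $(\ast)$ gives that $\Sigma\setminus\partial B$ lies on the sphere $\{\,|y-c_x|=r_x\,\}$ with $c_x=x+2\vec H(x)/|\vec H(x)|^2\in x+W_x$ and $r_x=2/|\vec H(x)|$. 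Hence $\Sigma\setminus\partial B$ lies in the round $2$-sphere $\{\,y\in x+W_x:|y-c_x|=r_x\,\}$. In either case, since the nonzero integral $2$-varifold $\mu\llcorner(\mathbb R^n\setminus\partial B)$ cannot be supported in a line, resp.\ in a circle, the $2$-plane, resp.\ round $2$-sphere, is the same for all admissible $x$; so $\Sigma=\overline{\Sigma\setminus\partial B}$ is contained in a fixed $2$-plane $\Pi$ or a fixed round $2$-sphere $S$, and in the spherical case $\vec H(x)=\tfrac2{r^2}(c-x)$ for $\mu$-a.e.\ $x$, where $c$ and $r$ are the centre and radius of $S$.

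Finally, set $M\in\{\Pi,S\}$. Since $\vec H$ is normal and ${\rm spt}\,\sigma\subseteq\partial B$, the varifold $\mu$ is stationary for all vector fields tangent to $M$ and supported in $\mathbb R^n\setminus\partial B$, so by the constancy theorem $\mu$ has constant integer multiplicity on each connected component of $M\setminus\partial B$. The set $M\cap\partial B$ has to be a circle: if it were a point or empty, $\mu$ would be a multiple of $\mathcal H^2\llcorner M$, which is impossible because $\mathcal H^2(\Pi)=\infty$, because $\mathcal W(\mathcal H^2\llcorner S)=4\pi\neq 2\pi$, and because ${\rm spt}\,\mu\not\subseteq\partial B$. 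Thus $M\setminus\partial B$ has exactly two components, on which $\mu$ has multiplicities in $\{0,1\}$ (a multiplicity $\ge 2$ would give $\widetilde\theta^2>1$ at a suitable interior point), not both $0$ (as $\mu\neq 0$, $\mu(\partial B)=0$) and not both $1$ (else $\mathcal W(\mu)=4\pi$). Hence $\mu=\mathcal H^2\llcorner\Sigma$, where $\Sigma$ is one of the two caps of $S$, or $\Sigma=\Pi\cap\overline B$; and in the latter case the free boundary relation $\eta=\pm\gamma$ along $\partial\Sigma\subseteq\partial B$ forces $0\in\Pi$, so $\Sigma$ is the flat unit disk.

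I expect the middle step to be the delicate one. Part of it is routine---$(\ast)$ must be derived using only base points in $\Sigma\setminus\partial B$, but density handles that---but the essential point is to verify that the locus $(\ast)$ cuts out is an \emph{honest round $2$-sphere} rather than a higher-dimensional sphere. This is exactly where Brakke's orthogonality $\vec H(x)\perp T_x\mu$ enters: it confines $\Sigma\setminus\partial B$ to the $3$-plane $x+W_x$, inside which $\{|y-c_x|=r_x\}$ restricts to a genuine $2$-sphere. The concluding identification via the constancy theorem and the value of the Willmore energy is then routine.
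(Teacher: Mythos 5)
Your argument is correct in substance, but it takes a genuinely different route from the paper's. The paper derives the pointwise relation $\tfrac14\vec H(x)=-\frac{(x-x_0)^\perp}{|x-x_0|^2}$ for one, then a second, base point, deduces $|\vec H|\in L^\infty(\mu)$, invokes Allard's theorem \cite{MR0307015} and its free boundary version \cite{MR863638} to obtain $C^{1,\alpha}$ regularity, and then splits into the minimal case (elliptic regularity, the tangency relation along $\partial\Sigma$, and the maximum principle give the flat unit disk) and the non-minimal case (a computation at a point with $\vec H\neq0$ places $\Sigma$ on a round $2$-sphere). You instead let the base point range over a $\mu$-conull, hence dense, subset of $\Sigma\setminus\partial B$ and use Brakke orthogonality to trap ${\rm spt}(\mu)$ in an affine $2$-plane or a round $2$-sphere with no regularity theory at all, and you then identify $\mu$ via the constancy theorem, the value $\mathcal W(\mu)=2\pi$, the bound $\widetilde\theta^2\leq1$, compactness, and the free boundary condition (which forces $0\in\Pi$ in the planar case); this keeps the whole proof at the varifold level and avoids Allard--Gr\"uter--Jost and the maximum principle, at the price of a slightly longer bookkeeping at the end. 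The one step you should justify more carefully is the constancy theorem: in the spherical case $\mu\llcorner(\mathbb R^n\setminus\partial B)$ is \emph{not} stationary as a varifold in $\mathbb R^n$ (its mean curvature is nonzero), so the classical statement does not apply verbatim; you need the version for an integral $2$-varifold supported in a $C^2$ $2$-manifold $M$ whose first variation vanishes on vector fields tangent to $M$, together with the locality fact $T_x\mu=T_xM$ for $\mu$-a.e.\ $x$, so that $\mu=\theta\,\mathcal H^2\llcorner M$ and $\int{\rm div}_M X\,\theta\,d\mathcal H^2=0$ for all tangential $X$ forces $\theta$ to be locally constant --- true and standard, but worth spelling out (in the planar case $\vec H\equiv0$ $\mu$-a.e., so the textbook statement suffices). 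Two small points to make explicit: $\Sigma=\overline{\Sigma\setminus\partial B}$ because $\mu(\partial B)=0$, and in the planar case the unbounded component of $\Pi\setminus\partial B$ is excluded by the compactness of ${\rm spt}(\mu)$ rather than by the Willmore value.
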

\begin{proof}
It follows from Theorem \ref{thm:density} that the tilde-density $\widetilde\theta^2(\mu,x)$ exists and is $ \geq 1$ for every $x \in\Sigma$. The assumption together with Theorem \ref{thm:density} then yield that $\widetilde\theta^2(\mu,x) =1$ for every $x \in \Sigma$. In particular, we conclude that $\theta^2(\mu,x)=1$ for every $x \in \Sigma\setminus \partial B$ and $\theta^2(\mu,x)=1/2$ for every $x \in \Sigma \cap \partial B$.
Since $\mu \neq 0$ and $\Sigma$ is compact the area estimate in Theorem \ref{thm:density} implies that there exists a radius $R>0$ such that $\Sigma \setminus B_R(x) \neq \emptyset $ for all $x\in \Sigma$.
Pick any point $x_0 \in \Sigma$, then
\[
1+\frac{1}{\pi}\int  \left| \frac{1 }{4}\vec H + \frac{(x-x_0)^\perp }{|x-x_0|^2}\right|^2   \,d\mu+ \frac{1}{\pi}\int  \left| \frac{1 }{4}\vec H + \frac{(x-\xi(x_0))^\perp }{|x-\xi(x_0)|^2}\right|^2   \,d\mu = \frac{1}{2 \pi}\mathcal W(\mu) =1 .
\]
We conclude that
\begin{equation}\label{Hequals}
\frac{1 }{4}\vec H(x) + \frac{(x-x_0)^\perp }{|x-x_0|^2}   =0\quad \text{for $\mu$-a.e. $x \in \Sigma$}.
\end{equation}
In particular,
\[
 |\vec H(x)| = 4 \left| \frac{(x-x_0)^\perp }{|x-x_0|^2} \right|  \leq \frac{8}{R} \quad \text{for $\mu$-a.e. $x \in \Sigma\setminus B_\frac{R}{2}(x_0)$}.
 \]
And similarly, picking a second point $x_1 \in \Sigma\setminus B_R(x_0)$ we conclude that $ |\vec H(x)| \leq \frac{8}{R}$ for $\mu$-a.e. $x \in \Sigma\setminus B_\frac{R}{2}(x_1)$. Since $B_\frac{R}{2}(x_0) \cap B_\frac{R}{2}(x_1) =\emptyset $ we have that $ |\vec H(x)| \leq \frac{8}{R}$ for $\mu$-a.e. $x \in \Sigma$. In particular, $|\vec H| \in L^\infty(\mu)$. By Allard's regularity theorem \cite{MR0307015}, Gr\"uter-Jost's free boundary version \cite{MR863638}, and Theorem \ref{thm:density} we conclude that $\Sigma$ is a $C^{1,\alpha}$ manifold with boundary. We consider two cases:

First suppose that $\Sigma$ is a free boundary minimal surface (cf. \cite{MR2972603}). Then writing $\Sigma$ locally as the graph of a $C^{1,\alpha}$ function elliptic regularity theory (see for example \cite{MR0244627}) implies that $\Sigma$ is smooth.
 For any given point $y\in \Sigma$ we have that
\[
 \frac{(x-y)^{\perp_x} }{|x-y|^2}  =0 \quad \text{for $x\in\Sigma\setminus\{y\}$},
 \]
where ${}^{\perp_x}$ stands for the orthogonal projection onto the normal space of $\Sigma$ at $x$.
In particular, $y-x \in T_x\Sigma$ for all $y\in \partial \Sigma$ and all points $x\in {\rm int}(\Sigma)$. Hence, $\partial \Sigma$ is contained in a $2$-dimensional plane. The maximum principle implies that $\Sigma$ is itself contained in this plane. Since $\Sigma$ is compact and $\partial \Sigma \subset \partial B$, $\Sigma$ must be equal to a flat unit disk.

Now assume that $\Sigma$ is not minimal. Then the exists a point $x_0 \in {\rm int}(\Sigma)$ such that $\vec H(x_0) \neq 0$ and equality holds in \eqref{Hequals}.
After possibly rotating $\Sigma$ we may assume that $T_{x_0}\Sigma = {\rm span}\{e_1,e_2\}$ and that $\vec H(x_0) = \frac{2}{r} \,e_3$ for some $r \neq 0$.
This implies that for $j=4,...,n$
\begin{equation}\label{in3d}
0 = \vec H(x_0) \cdot e_j = 4\frac{(x-x_0)^{\perp_{x_0}} }{|x-x_0|^2} \cdot e_j = 4\frac{(x-x_0)_j }{|x-x_0|^2} 
\end{equation}
for all $x \in \Sigma \setminus\{x_0\}$. (First for $\mu$-almost all points, and by continuity in $x$ of the right hand side of equation \eqref{in3d} all points.) This implies that $\Sigma \subset x_0+ \mathbb R^3 \times \{0\}$. On the other hand,
\[
\frac{2}{r} = \vec H(x_0) \cdot e_3 = 4\frac{(x-x_0)_3 }{|x-x_0|^2}, 
\]
i.e. $\frac{1}{r}\, |x-x_0|^2= 2(x-x_0)_3  $, or equivalently
\[
 r^2= (x-x_0)_1^2+(x-x_0)_2^2+((x-x_0)_3- r)^2= |x-(x_0 + r e_3)|^2
 \]
for all $x \in \Sigma \setminus\{x_0\}$, and $\Sigma \subset \partial B_r(x_0 + re_3) \cap \mathbb R^3 \times \{0\} $. Since $\partial \Sigma \subset \partial B$ we must have that either $\Sigma = (\partial B_r(x_0 + re_3) \cap \mathbb R^3 \times \{0\} )\cap \overline B$ or $\Sigma =(\partial B_r(x_0+re_3) \cap \mathbb R^3 \times \{0\} )\setminus B$.
\end{proof}

An immediate corollary of Theorem \ref{LiYau} is the following very special case of a Theorem due to Ekholm, White and Wienholtz \cite{MR1888799}. 
\begin{cor}
Any immersed compact free boundary minimal surface with respect to the unit ball of boundary length strictly less that $4\pi$ (or equivalently of area strictly less that $2\pi$) must be embedded.
\end{cor}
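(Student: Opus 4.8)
The plan is to deduce the corollary directly from Theorem \ref{LiYau} applied to the varifold associated to a free boundary minimal surface, using the relationship between the Willmore energy, boundary length, and area in the minimal case.

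First I would recall that if $F : \Sigma \to \mathbb R^n$ is an immersed compact free boundary minimal surface with respect to the unit ball, then $\vec H \equiv 0$, so the Willmore energy reduces to
\[
\mathcal W(F) = \int_{\partial \Sigma} x \cdot \eta \,d\mathcal H^1_{F^*\delta}.
\]
Since $\partial \Sigma$ meets $\partial B$ orthogonally, along the boundary the outward unit conormal $\eta$ coincides with the position vector $x = \gamma(x)$, which is a unit vector; hence $x \cdot \eta = 1$ along $\partial \Sigma$ and $\mathcal W(F) = \mathcal H^1_{F^*\delta}(\partial \Sigma) = L(\partial \Sigma)$, the boundary length. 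Moreover, testing the first variation identity \eqref{1st variation Neumann} (equivalently, using \eqref{position vector} with $\vec H = 0$) with the position vector field gives $2\mu(\mathbb R^n) = \int x \cdot \eta\,d\sigma = L(\partial \Sigma)$, so the area of $\Sigma$ equals $\tfrac12 L(\partial \Sigma)$. This justifies the parenthetical equivalence in the statement: $L(\partial \Sigma) < 4\pi$ if and only if $\operatorname{area}(\Sigma) < 2\pi$.

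Next I would simply invoke Theorem \ref{LiYau}: the hypothesis $L(\partial \Sigma) < 4\pi$ translates into $\mathcal W(F) < 4\pi$, and Theorem \ref{LiYau} asserts precisely that $\mathcal W(F) < 4\pi$ forces $F$ to be an embedding. This completes the argument; there is essentially no obstacle, since all the analytic content is already packaged in Theorem \ref{LiYau}. The only point requiring a line of care is the identification $x \cdot \eta = 1$ on the boundary, which follows from the free boundary condition (as already observed in Section \ref{applications} where $\kappa_g = x \cdot \eta$ is computed), together with the elementary fact that an immersed minimal surface has $\chi$-independent Willmore energy equal to the integral of the geodesic boundary curvature alone.

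Thus the proof is one short paragraph: reduce $\mathcal W$ to boundary length in the minimal case, note the area-length relation via the position-vector test field, and apply the embeddedness criterion of Theorem \ref{LiYau}.
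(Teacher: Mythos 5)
Your proposal is correct and is exactly the argument the paper intends (the paper states the corollary as immediate from Theorem \ref{LiYau} without writing details): for a minimal free boundary surface one has $\vec H\equiv 0$ and $\eta=\gamma=x$ on $\partial\Sigma$, so $\mathcal W(F)=\mathcal H^1(\partial\Sigma)$, while testing with the position vector gives $2\,\mu(\mathbb R^n)=\mathcal H^1(\partial\Sigma)$, and the embeddedness criterion $\mathcal W(F)<4\pi$ of Theorem \ref{LiYau} then applies. No gaps; the identification $x\cdot\eta=1$ via orthogonality at the free boundary is the only point needing mention, and you handled it.
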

\begin{rmk}
Bourni and Tinaglia \cite{MR2971209} have extended the result of Ekholm, White and Wienholtz to surfaces with small $L^p$-norm of the mean curvature with $p \geq 2$.
\end{rmk}

\section{Geometric inequalites for free boundary surfaces}\label{general support surface}
In this section we consider free boundary surfaces with respect to an orientable $C^2$-hypersurface $S$ with outward unit normal $\gamma$ that meet $S$ from the inside.
More precisely, we make the following assumptions.

We assume that $\mu$ is an integer rectifiable $2$-varifold in $\mathbb R^n$ of compact support $\Sigma:= {\rm spt}(\mu)$, $\Sigma \cap S \neq \emptyset$, with generalized mean curvature $\vec H \in L^p(\mu;\mathbb R^n)$, $p>2$, such that
\begin{equation}\label{1st variation Neumann with boundary term 2}
\int {\rm div}_\Sigma X\,d\mu = - \int  \vec H\cdot X \,d\mu + \int  X\cdot \gamma \,d\sigma 
\end{equation}
for all $X \in C_c^1(\mathbb R^n,\mathbb R^n)$, and where
$\sigma= |\delta \mu| \llcorner Z$ ($Z= \{x \in \mathbb R^n: D_\mu|\delta \mu|(x)= +\infty\}$).
By \cite[Corollary 3.2]{MR863638} we have that the density
\[
\theta^2(\mu,x_0)=
\lim_{r \downarrow 0} \frac{\mu(B_r(x_0))}{\pi r^2}
\]
exists at every point $x_0\in {\rm spt}(\mu)$, and that $\theta^2(\mu,x_0)\geq 1/2$ for every point $x_0 \in {\rm spt}(\sigma)$.
\begin{lemma}\label{lem:no point mass}
For every $x_0\in \mathbb R^n$ we have
\[
\lim_{r\downarrow 0}\sigma(B_r(x_0)) =0.
\]
\end{lemma}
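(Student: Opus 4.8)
The plan is to show that $\sigma$ has no atoms by contradiction: suppose $\sigma(\{x_0\}) = c > 0$ for some $x_0 \in \mathbb{R}^n$, and derive an impossible lower bound on the area ratios $\mu(B_r(x_0))/(\pi r^2)$ as $r \downarrow 0$. The mechanism is the first variation identity \eqref{1st variation Neumann with boundary term 2}: a concentrated boundary measure at $x_0$ acts, after testing with a suitable radial cutoff of the position vector field $x - x_0$, like a monotonicity-violating source term. Concretely, I would first record that $x_0$ must lie on $S$ (since ${\rm spt}(\sigma) \subset S$), and then set up the monotonicity computation localized near $x_0$.

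First I would fix a cutoff and test \eqref{1st variation Neumann with boundary term 2} with $X(x) = \phi(|x - x_0|/r)(x - x_0)$ for a smooth $\phi$ that is $1$ near $0$ and supported in $[0,1)$; computing ${\rm div}_\Sigma X$ and using $|x - x_0| \le r$ on the support, plus the bound $|\vec H| \in L^p(\mu)$ with $p > 2$ (so that $\int_{B_r} |\vec H| \, d\mu \le \|\vec H\|_{L^p(\mu)} \mu(B_r)^{1 - 1/p}$ is a higher-order error by the already-established density bound $\theta^2(\mu,x_0) < \infty$), one obtains an almost-monotonicity statement of the form
\[
\frac{d}{dr}\left( \frac{\mu(B_r(x_0))}{r^2} \right) \ge \frac{1}{r^3}\,\sigma(B_r(x_0)) \cdot (x - x_0)\cdot \gamma\big|_{\text{boundary}} - (\text{error}),
\]
more precisely a version where the boundary term contributes $\sim r^{-2}\int_{B_r(x_0)} (x - x_0)\cdot\gamma \, d\sigma$. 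The key geometric observation is that since $S$ is $C^2$, for $x \in S \cap B_r(x_0)$ we have $(x - x_0)\cdot\gamma(x) = O(r^2)$, which unfortunately makes that naive term too small — so this precise bookkeeping matters and the main work is choosing the right test field.

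The cleaner route, which I expect to be the actual argument, is to instead invoke the Gr\"uter–Jost reflection/monotonicity machinery that already underlies the cited \cite[Corollary 3.2]{MR863638}: near $x_0 \in S$ one reflects $\mu$ across $S$ (using that $S$ is $C^2$, after a diffeomorphism flattening $S$) to obtain a varifold $\tilde\mu$ on a full neighborhood whose first variation has \emph{no} boundary term along the flattened $S$ but only an $L^p$ generalized mean curvature with controlled norm; an atom of $\sigma$ at $x_0$ would force $\tilde\mu$ to have a one-dimensional piece of positive $\mathcal{H}^1$-measure concentrated at $x_0$, i.e.\ would violate the fact that $\tilde\mu$ is a $2$-varifold with locally bounded first variation and hence satisfies the usual (Allard) monotonicity, under which $\tilde\mu(B_r(x_0))/r^2$ is monotone up to a multiplicative $C^{1,\alpha}$-in-$r$ factor and in particular $\mu(B_r(x_0)) = O(r^2)$. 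Feeding $\mu(B_r(x_0)) \le C r^2$ back into the first-variation identity tested against $\phi(|x-x_0|/r)(x-x_0)$ then gives $\sigma(B_r(x_0)) \le C' r + o(1) \to 0$.

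The main obstacle is the bookkeeping of the boundary term: one must verify that the contribution $r^{-2}\int_{B_r(x_0)}(x-x_0)\cdot\gamma\,d\sigma$ is genuinely lower-order, which uses both $C^2$-regularity of $S$ (to get $(x-x_0)\cdot\gamma(x) = O(|x-x_0|^2)$) \emph{and}, circularly-looking but not actually circular, the a priori finiteness of $\limsup_{r\downarrow 0} \sigma(B_r(x_0))$ coming from bounded first variation. I would resolve this by running the estimate on dyadic annuli: writing $\sigma(B_r) = \sum_{j\ge 0}\sigma(B_{2^{-j}r}\setminus B_{2^{-j-1}r})$ and bounding each annular contribution using the first-variation identity on that scale, the $O(\text{scale}^2)$ gain on $(x-x_0)\cdot\gamma$ beats the $O(\text{scale}^{-2})$ loss from the normalization, so the tail sums to $o(1)$. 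Once $\sigma(B_r(x_0)) \to 0$ is established for this fixed $x_0$, the statement for all $x_0$ is immediate since if $x_0 \notin S$ then $\sigma(B_r(x_0)) = 0$ for small $r$ by ${\rm spt}(\sigma)\subset S$.
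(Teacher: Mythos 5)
There is a genuine gap, and it is precisely at the point you flagged yourself but did not resolve: the choice of test field in the final step. Your plan correctly assembles the two main ingredients the paper uses --- the Gr\"uter--Jost area bound $\mu(B_{2r}(x_0))\leq Cr^2$ and the first variation identity \eqref{1st variation Neumann with boundary term 2} at scale $r$ --- but you then propose to extract $\sigma(B_r(x_0))\leq C'r+o(1)$ by testing with the radial field $\phi(|x-x_0|/r)(x-x_0)$. That field cannot control $\sigma(B_r(x_0))$ at all: since $S$ is $C^2$ and $x_0\in S$, one has $(x-x_0)\cdot\gamma(x)=O(|x-x_0|^2)$ for $x\in S$ near $x_0$, so the boundary term is only $O(r^2)\,\sigma(B_r(x_0))$ (and a hypothetical atom at $x_0$ is completely invisible because $X(x_0)=0$). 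You therefore obtain an inequality in which $\sigma(B_r(x_0))$ enters with a vanishing weight, from which no decay of $\sigma(B_r(x_0))$ can be deduced; the dyadic-annulus device does not repair this, because the $O(\mathrm{scale}^2)$ gain you invoke is a bound on how little the radial field sees $\sigma$, not a bound on $\sigma$ itself. Likewise, the reflection heuristic (``an atom of $\sigma$ would force a one-dimensional piece of $\tilde\mu$'') is not a valid argument as stated; what the reflection machinery actually supplies is exactly the area bound you already cite, and one still needs a test-field computation to convert it into $\sigma(B_r(x_0))\to 0$.

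The missing idea in the paper's proof is to test with a field whose \emph{normal} component along $S$ is of unit size: $X=-\varphi\,Dd_S$, where $d_S$ is the oriented distance function of $S$ (of class $C^2$ near $x_0$) and $\varphi$ is a cutoff equal to $1$ on $B_r(x_0)$ and supported in $B_{2r}(x_0)$ with $|D\varphi|\lesssim 1/r$. Since $Dd_S=\gamma$ on $S$, the boundary term is $-\int\varphi\,d\sigma$, which dominates $\sigma(B_r(x_0))$, while
\[
\sigma(B_r(x_0))\ \leq\ \int \bigl(\varphi\,|D^2 d_S|+|D\varphi|\bigr)\,d\mu+\int_{B_{2r}(x_0)}|\vec H|\,d\mu\ \leq\ \Bigl(C(S)+\tfrac{c}{r}\Bigr)\mu(B_{2r}(x_0))+\int_{B_{2r}(x_0)}|\vec H|\,d\mu,
\]
and the right-hand side tends to $0$ as $r\downarrow 0$ by the area bound of Gr\"uter--Jost together with $\vec H\in L^p(\mu)$ (indeed $L^1$ integrability suffices for the last term). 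If you replace your radial field by this normal field in the final step, your outline becomes essentially the paper's proof; as written, the argument does not close.
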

\begin{proof}
Let $x_0\in {\rm spt}(\sigma)\subset S$. For $r>0$ small enough so that the oriented distance function $d_S$ of $S$ is of class $C^2$. Let $\varphi \in C^1_c(\mathbb R^n) ,$ $0\leq\varphi \leq 1$, be such that $\varphi =1$ on $B_r(x_0)$, $\varphi =0$ outside $B_{2r}(x_0)$, and $|D\varphi| \leq c$ for some constant $c$ independent of $r$.
Testing \eqref{1st variation Neumann with boundary term 2} with $X= -\varphi\,Dd_S$ we obtain
\begin{align*}
\sigma(B_r(x_0))
&\leq \int \varphi \,d\sigma \leq \int \varphi |D^2d_S|+ |D\varphi|\,d\mu + \int_{B_{2r}(x_0)}  |\vec H| \,d\mu\\
&\leq \left( C(S)+ \frac{c}{r}\right)\mu (B_{2r}(x_0)) + \int_{B_{2r}(x_0)}  |\vec H| \,d\mu,
\end{align*}
which by \cite[Theorem 3.4]{MR863638} goes to zero as $r \downarrow 0$.
\end{proof}

We need the following definition.

\begin{definition}[cf. \cite{MR3011290}]{\bf{(interior and exterior ball curvatures)}}
The \emph{interior (exterior) ball curvature} $\overline\kappa(x)$ $(\,\underline\kappa(x)\,)$ of $(S,\gamma)$ at $x\in S$ is defined by
\[
\overline\kappa(x) 
:= \sup_{y\in S\setminus\{x\}} Z(x,y)
\quad \left( \underline\kappa(x) := \inf_{y\in S\setminus\{x\}} Z(x,y) \right) ,
\]
where
\[Z(x,y):= \frac{2(x-y)\cdot \gamma(x)}{|x-y|^2}.\]
The {\emph{ball curvature}} $\kappa(x)$ of $S$ at $x\in S$ is defined by $\kappa(x):= \max\{\overline\kappa(x),-\underline\kappa(x)\} \geq 0$.
For a subset $A$ of $S$ we set
\[
\overline\kappa_A(x) 
:= \sup_{y\in A\setminus \{x\}} Z(x,y)\quad \left( \underline\kappa_A(x) 
:= \inf_{y\in A\setminus \{x\}} Z(x,y)  \right) ,
\]
and $\kappa(x):= \max\{\overline\kappa_A(x),-\underline\kappa_A(x)\} \geq 0$.
\end{definition}
\begin{rmk}
In case $S=\partial \Omega$ for a bounded and convex set $\Omega$ the interior (exterior) ball curvature is the curvature (negative curvature) of the largest ball (ball complement) enclosed by $\Omega$ ($\mathbb R^n\setminus \Omega$) and touching $\partial \Omega$ at $x$.
\end{rmk}
\noindent 
Writing $S$ locally as a graph over its tangent plane one easily to verifies the following lemma.
\begin{lemma}\label{finiteness of ball curvature}
For any compact sets $K_1,K_2 \subset S$ we have
\[
\sup_{K_2} \kappa_{K_1} <\infty.
\]
\end{lemma}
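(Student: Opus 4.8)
The plan is to reduce to a purely local estimate via compactness. First I would fix a point $x \in K_1$ and a point $p \in K_2$, and distinguish two regimes for $Z(x,y) = \frac{2(x-y)\cdot\gamma(x)}{|x-y|^2}$ as $y$ ranges over $K_1\setminus\{x\}$: the ``far'' regime where $|x-y| \geq \delta$ for a fixed $\delta>0$, and the ``near'' regime where $|x-y| < \delta$. In the far regime one has the trivial bound $|Z(x,y)| \leq 2/\delta$ using $|\gamma(x)|=1$ and Cauchy--Schwarz, so no hypersurface geometry is needed there. The near regime is where the $C^2$ assumption enters.

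For the near regime, the key step is to write $S$ locally as a graph over its tangent plane at $x$. Concretely, choose orthonormal coordinates so that $T_xS = \mathbb{R}^{n-1}\times\{0\}$ and $\gamma(x) = e_n$; then near $x$ the surface $S$ is the graph $\{(\zeta, u(\zeta))\}$ of a $C^2$ function $u$ with $u(0)=0$ and $Du(0)=0$. Writing $y = (\zeta, u(\zeta))$ and $x = (0,0)$ in these coordinates, one computes
\[
(x-y)\cdot\gamma(x) = -u(\zeta), \qquad |x-y|^2 = |\zeta|^2 + u(\zeta)^2 \geq |\zeta|^2,
\]
so that $|Z(x,y)| \leq \frac{2|u(\zeta)|}{|\zeta|^2}$. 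Since $u(0)=0$ and $Du(0)=0$, Taylor's theorem with the $C^2$ bound gives $|u(\zeta)| \leq \tfrac12 \big(\sup |D^2u|\big)\,|\zeta|^2$ on a neighborhood of $0$, hence $|Z(x,y)| \leq \sup|D^2 u|$. The size of this graph neighborhood and the bound on $\sup|D^2u|$ depend only on the $C^2$ norm of the local parametrization of $S$ near $x$.

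The main (and only real) obstacle is uniformity in $x \in K_1$: a priori the graphical neighborhood and the Hessian bound could degenerate as $x$ varies. This is handled by a standard compactness argument. Since $S$ is a $C^2$ hypersurface and $K_1$ is compact, one can cover $K_1$ by finitely many coordinate charts in each of which $S$ is a $C^2$ graph with uniformly bounded second derivatives, and choose a single $\delta>0$ (a Lebesgue number for the cover) such that whenever $x,y\in S$ with $|x-y|<\delta$ and $x\in K_1$, both points lie in one such chart; then the near-regime estimate above holds with a constant depending only on the finite cover, and thus only on $S$ and $K_1$. Combining the far-regime bound $2/\delta$ with this uniform near-regime bound yields $\sup_{K_1}\overline\kappa_{K_1} < \infty$ and $\inf_{K_1}\underline\kappa_{K_1} > -\infty$; since $Z(x,y)\leq Z$-sup over all of $S$ trivially dominates the $K_1$-restricted quantity, and since we in fact only used $y \in K_1$ (which contains $K_2$ is not needed — rather $x\in K_2$, $y\in K_1$; the same argument applies swapping the roles, covering $K_2$ as well), we conclude $\sup_{K_2}\kappa_{K_1}<\infty$ as claimed. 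I would remark that the argument in fact shows the stronger local statement that $\kappa_{K_1}$ is bounded on any compact subset of $S$, with the bound depending only on $S$ and $K_1$.
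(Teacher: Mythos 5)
Your proposal is correct and is exactly the route the paper intends: the paper's entire ``proof'' is the remark that one writes $S$ locally as a $C^2$ graph over its tangent plane, and your far/near splitting, Taylor estimate $|(x-y)\cdot\gamma(x)|\leq \tfrac12\sup|D^2u|\,|x-y|^2$, and finite-cover/Lebesgue-number argument for uniformity are just the standard details behind that one-liner. The only blemish is the momentary swap of the roles of $K_1$ and $K_2$ (in the lemma $x\in K_2$ and $y\in K_1$), which you yourself correct at the end by covering both compact sets, so it does not affect the argument.
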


We test equation \eqref{1st variation Neumann with boundary term 2} with 
$X=\varphi |x-x_0|^{-2}(x-x_0)$, where $\varphi(x)= ( |x-x_0| _\sigma^{-2}-\rho^{-2})^+|x-x_0|^2  \geq 0$, and where $x_0 \in S$.
We have
\[
\int X \cdot \eta \,d\sigma
=\sigma^{-2}\int_{B_\sigma} (x-x_0) \cdot \gamma \,d\sigma - \rho^{-2}\int_{B_\rho} ( x-x_0 ) \cdot \gamma \,d\sigma+ \int_{B_\rho \setminus B_\sigma} \frac{x-x_0}{|x-x_0|^2} \cdot \gamma \,d\sigma,
\]
where the double usage of the symbol $\sigma$ should not lead to confusion. Then for a.e. $0< \sigma <\rho < \infty$ we have
\begin{align*}
\frac{1}{\pi} &\int_{B_\rho(x_0)\setminus B_\sigma(x_0)}  \left| \frac{1 }{4}\vec H + \frac{(x-x_0)^\perp }{|x-x_0|^2}\right|^2   \,d\mu 
-\frac{1}{4 \pi} \int_{B_\rho(x_0) \setminus B_\sigma(x_0) } \frac{2(x-x_0)}{|x-x_0|^2} \cdot \gamma \,d\sigma \\
&= (g_{x_0}(\rho) +b_{x_0}(\rho) )- (g_{x_0}(\sigma) +b_{x_0}(\sigma)) ,
\end{align*}
where 
\[
b_{x_0}(r) = 
-\frac{1}{2\pi r^2}\int_{B_r} (x-x_0) \cdot \gamma \,d\sigma .
\]
We note that this identity was originally derived in \cite{MR1243525} for smooth surfaces.
Using Lemma \ref{finiteness of ball curvature} and the fact that (by Lemma \ref{lem:no point mass})
\[
|b_{x_0}(r)| \leq  \frac{\sigma(B_r)}{4\pi} \sup_{B_r}\kappa_{\rm{spt}(\sigma)} \to 0\quad\text{as $r \to 0$}
\]
one easily concludes that one can let $\rho \to \infty$ and $\sigma \to 0$ to obtain
\begin{align}\label{general identity}
 2 \theta^2(\mu,x_0)+\frac{2}{\pi}&\int  \left| \frac{1 }{4}\vec H + \frac{(x-x_0)^\perp }{|x-x_0|^2}\right|^2   \,d\mu  \nonumber
\\
&=  \frac{1}{8 \pi}\int  |\vec H|^2  \,d\mu + \frac{1}{2\pi} \int  \frac{2(x-x_0)\cdot\gamma }{|x-x_0|^2}  \,d\sigma  .
\end{align}
Even though the identity \eqref{general identity} is well known \cite{MR1243525}, the geometric interpretation of the boundary term does not seem to have been exploited thus far. 
The quantity
\[
\frac{2(x-x_0)\cdot\gamma(x) }{|x-x_0|^2}
\]
is the curvature of the tangent ball, plane, or ball complement of $S$ at $x$ passing through $x_0$.

\begin{prop}
We have
\begin{align*}
2\pi
&\leq  \frac{1}{4}\int  |\vec H|^2  \,d\mu +  \int  \overline \kappa_{\rm{spt}(\sigma)}   \,d\sigma  .
\end{align*}
Moreover,
equality holds if and only if $\Sigma$ is a spherical cap or a flat unit disk.
\end{prop}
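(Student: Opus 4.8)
The plan is to extract the inequality directly from the identity \eqref{general identity} and to analyse the equality case along the lines of Proposition \ref{equalitycase}.

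For the inequality I would fix any point $x_0 \in \mathrm{spt}(\sigma)$; such a point exists since $\Sigma$ meets $S$ (and if it happened that $\sigma \equiv 0$, then $\mu$ is a closed varifold and the claim follows a fortiori from the classical bound $\tfrac{1}{4}\int |\vec H|^2\,d\mu \geq 4\pi$). By \cite[Corollary 3.2]{MR863638} we have $\theta^2(\mu,x_0) \geq \tfrac{1}{2}$, and by Lemma \ref{lem:no point mass} the point $x_0$ carries no $\sigma$-mass, so for $\sigma$-a.e.\ $x$ it is an admissible competitor in the supremum defining $\overline\kappa_{\mathrm{spt}(\sigma)}(x)$; hence
\[
\frac{2(x-x_0)\cdot\gamma(x)}{|x-x_0|^2} = Z(x,x_0) \leq \overline\kappa_{\mathrm{spt}(\sigma)}(x) \qquad \sigma\text{-a.e.}
\]
Discarding the non-negative integral on the left of \eqref{general identity} and inserting these two facts gives $1 \leq 2\theta^2(\mu,x_0) \leq \tfrac{1}{8\pi}\int|\vec H|^2\,d\mu + \tfrac{1}{2\pi}\int \overline\kappa_{\mathrm{spt}(\sigma)}\,d\sigma$, which after multiplication by $8\pi$ and division by $4$ is exactly the asserted inequality.

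For the equality statement, assume $2\pi = \tfrac{1}{4}\int|\vec H|^2\,d\mu + \int\overline\kappa_{\mathrm{spt}(\sigma)}\,d\sigma$. Running the above estimate backwards forces, for \emph{every} $x_0 \in\mathrm{spt}(\sigma)$, that $\theta^2(\mu,x_0) = \tfrac{1}{2}$, that $Z(\cdot,x_0) = \overline\kappa_{\mathrm{spt}(\sigma)}$ $\sigma$-a.e., and the pointwise identity
\[
\tfrac{1}{4}\vec H(x) + \frac{(x-x_0)^\perp}{|x-x_0|^2} = 0 \qquad \text{for }\mu\text{-a.e.\ }x .
\]
This last identity is exactly \eqref{Hequals}, so from here the argument of Proposition \ref{equalitycase} applies: the area bound of \cite{MR863638} produces a radius $R>0$ with $\Sigma\setminus B_R(x)\neq\emptyset$ for all $x\in\Sigma$, whence $|\vec H|\in L^\infty(\mu)$; Allard's regularity theorem and its free boundary version \cite{MR0307015,MR863638} then give that $\Sigma$ is a $C^{1,\alpha}$ manifold with boundary, and the dichotomy ``minimal vs.\ not minimal'' forces $\Sigma$ to be a flat unit disk --- using $x-x_0 \in T_x\Sigma$ for all $x_0\in\mathrm{spt}(\sigma)$ together with the maximum principle --- or a round spherical cap, via the explicit computation around \eqref{in3d}. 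Conversely, a flat unit disk has $\vec H\equiv 0$ and, by orthogonality of the intersection, $\gamma(x) = x$ along its boundary circle, so $Z \equiv 1$ there and $\int\overline\kappa_{\mathrm{spt}(\sigma)}\,d\sigma = 2\pi$; and for a spherical cap $\Sigma\subset\partial B_r(p)$ a short computation shows that $\tfrac{1}{4}\vec H + \tfrac{(x-x_0)^\perp}{|x-x_0|^2}$ vanishes identically for $x_0\in\partial\Sigma$, so that \eqref{general identity} collapses to the claimed equality.

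I expect the equality case to be the only genuine obstacle. Unlike in Proposition \ref{equalitycase}, where the tilde-density equals $1$ everywhere and thereby pins down the mean curvature identity at all base points, here \eqref{Hequals} is obtained only for base points in $\mathrm{spt}(\sigma)$; one therefore has to bootstrap the $L^\infty$-bound on $\vec H$ and the rigidity from boundary base points alone (so one should check that $\mathrm{spt}(\sigma)$ is not contained in a line), and one must also make precise what a ``spherical cap'' should mean relative to an arbitrary support surface $S$.
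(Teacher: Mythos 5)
Your proof of the inequality is exactly the paper's: fix $x_0\in\mathrm{spt}(\sigma)$, use $\theta^2(\mu,x_0)\geq 1/2$, drop the nonnegative integral in \eqref{general identity}, and bound the boundary integrand by $Z(x,x_0)\leq \overline\kappa_{\mathrm{spt}(\sigma)}(x)$. That part is fine.

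The equality case, however, is where your proposal has a genuine gap, and it is the one you yourself flag. You correctly extract from equality that, for every base point $x_0\in\mathrm{spt}(\sigma)$, one has $\theta^2(\mu,x_0)=1/2$, the identity \eqref{Hequals}, and $Z(\cdot,x_0)=\overline\kappa_{\mathrm{spt}(\sigma)}$ $\sigma$-a.e.; but your plan to then rerun the proof of Proposition \ref{equalitycase} directly does not go through, because that proof uses \eqref{Hequals} for base points ranging over \emph{all} of $\Sigma$: the non-minimal case evaluates the identity at a fixed interior point $x_0$ with $\vec H(x_0)\neq 0$ while the base point runs over $\Sigma\setminus\{x_0\}$ (this is what produces \eqref{in3d} and places all of $\Sigma$ on a sphere), and the minimal case likewise needs interior base points plus the boundary to conclude planarity and then invoke the maximum principle inside the \emph{unit} ball. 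With boundary base points only, the same computation merely places $\mathrm{spt}(\sigma)$ on a sphere or plane and says nothing directly about $\Sigma$; moreover the notions of ``flat unit disk'' and ``spherical cap'' have no meaning yet relative to an arbitrary support surface $S$. The paper's resolution is precisely the ingredient you list but do not use: the forced identity $\overline\kappa_{\mathrm{spt}(\sigma)}(x)=Z(x,y)$ for all $y\in\mathrm{spt}(\sigma)\setminus\{x\}$ (its \eqref{Z is constant}) says that $\mathrm{spt}(\sigma)$ lies on the tangent sphere of $S$ at $x$, and since this holds for $\sigma$-a.e.\ $x$ there is a \emph{single} sphere containing $\mathrm{spt}(\sigma)$ and tangent to $S$ at each of its points; hence $\gamma$ agrees ($\sigma$-a.e.) with the normal of that sphere, so after translating and rescaling $\mu$ is a compact free boundary varifold with respect to the unit ball with $\mathcal W(\mu)=2\pi$, and Proposition \ref{equalitycase} applies as a black box. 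This reduction is what removes the need to bootstrap the $L^\infty$-bound on $\vec H$, the regularity, and the rigidity from boundary base points alone, which is where your direct route stalls. (Your converse computations for the disk and the cap are consistent with this picture, but they too implicitly assume the tangency of $S$ along the boundary to a fixed sphere or plane, i.e.\ exactly the structure the paper first establishes.)
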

\begin{proof}
The inequality follows immediately from \eqref{general identity}, the definition of $\overline \kappa_{\rm{spt}(\sigma)} $, and the fact that the density at a boundary point is at least $1/2$. Now assume that equality holds. Then for $\sigma$-a.e. $x\in {\rm spt}(\sigma)$ we have that
\begin{equation}\label{Z is constant}
\overline \kappa_{\rm{spt}(\sigma)}(x) = Z(x,y)\quad\text{for all $y\in{\rm spt}(\sigma)\setminus \{ x \}$}.
\end{equation}
Moreover, by \eqref{Z is constant} we see that ${\rm spt}(\sigma)$ must lie on the tangent sphere of $S$ at $x$.
Since this is true for $\sigma$-a.e. point $x\in {\rm spt}(\sigma)$ there exists a \emph{single} sphere that is the tangent sphere of $S$ at every point $x \in {\rm spt}(\sigma)$.
After rescaling and translating we are in the situation of Proposition \ref{equalitycase}, which completes the proof.
\end{proof}

\begin{rmk}
A weaker, but also sharp, inequality that can be obtained from \eqref{general identity} was observed by Rivi\`ere \cite[Lemma 1.2]{MR3008339}.
\end{rmk}

\begin{lemma}
Let $\Omega$ be a convex domain of class $C^2$. Then
\[
\sup_{x\in\partial \Omega}\overline \kappa = 
\sup_{v\in T(\partial \Omega), |v|=1}A^{\partial \Omega}(v,v),\]
where $A^{\partial \Omega}$ denotes the second fundamental form of $\partial \Omega$ with outward unit normal $\gamma$.
\end{lemma}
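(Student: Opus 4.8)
The plan is to prove the two inequalities separately. The inequality $\sup_{x\in\partial\Omega}\overline\kappa\ \ge\ \sup_{v\in T(\partial \Omega), |v|=1}A^{\partial \Omega}(v,v)$ holds for any $C^2$ hypersurface and is a purely local computation; convexity is used only for the reverse inequality. In what follows set $\Lambda:=\sup_{|v|=1}A^{\partial\Omega}(v,v)\in[0,\infty]$, noting that by convexity $\Lambda\ge 0$; if $\Lambda=\infty$ the first inequality already gives $\sup_{x}\overline\kappa=\infty$ and there is nothing more to prove, so assume $\Lambda<\infty$.

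\textbf{The inequality ``$\ge$''.} Fix $x\in\partial\Omega$ and a unit vector $v\in T_x\partial\Omega$, and pick a $C^2$ curve $t\mapsto y(t)$ in $\partial\Omega$ with $y(0)=x$ and $y'(0)=v$. Differentiating $y'(t)\cdot\gamma(y(t))\equiv 0$ at $t=0$ gives $y''(0)\cdot\gamma(x)=-A^{\partial\Omega}_x(v,v)$, so that $(x-y(t))\cdot\gamma(x)=\tfrac12 A^{\partial\Omega}_x(v,v)\,t^2+o(t^2)$ while $|x-y(t)|^2=t^2+o(t^2)$, and therefore $Z(x,y(t))\to A^{\partial\Omega}_x(v,v)$ as $t\downarrow 0$. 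Since $Z(x,y(t))\le\overline\kappa(x)$ for every $t$, we conclude $A^{\partial\Omega}_x(v,v)\le\overline\kappa(x)$; taking the supremum over $v$ and then over $x$ finishes this direction.

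\textbf{The inequality ``$\le$''.} It suffices to show $Z(x,y)\le\Lambda$ for all $x,y\in\partial\Omega$ with $x\ne y$. The geometric input is the interior ball condition of radius $1/\Lambda$: for every $x\in\partial\Omega$ one has $B_{1/\Lambda}\big(x-\Lambda^{-1}\gamma(x)\big)\subset\Omega$. Granting this, for any $y\in\partial\Omega$ we have $y\notin B_{1/\Lambda}(x-\Lambda^{-1}\gamma(x))$, i.e.\ $|y-x+\Lambda^{-1}\gamma(x)|^2\ge\Lambda^{-2}$; expanding the square yields $|x-y|^2\ge 2\Lambda^{-1}(x-y)\cdot\gamma(x)$, which is exactly $Z(x,y)\le\Lambda$, hence $\overline\kappa(x)\le\Lambda$. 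To prove the interior ball condition I would use the normal map $\Phi(p,s):=p-s\,\gamma(p)$ on $\partial\Omega\times[0,1/\Lambda)$: its differential in the $p$ variable is $\mathrm{Id}-s\,W_p$ with $W_p$ the Weingarten map, whose eigenvalues lie in $[0,\Lambda]$ by convexity, so $\Phi(\cdot,s)$ is a local diffeomorphism for $s<1/\Lambda$; convexity of $\Omega$ forces $\Phi(\cdot,s)$ to be injective, so $\Phi(\partial\Omega,s)$ is the boundary of the inner parallel body $\{z\in\Omega:\operatorname{dist}(z,\partial\Omega)\ge s\}$ for all $s<1/\Lambda$, and letting $s\uparrow 1/\Lambda$ places the tangent ball $B_{1/\Lambda}(x-\Lambda^{-1}\gamma(x))$ inside $\Omega$.

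The main obstacle is exactly this interior ball condition: passing from the pointwise curvature bound to a \emph{uniform} inner ball of radius $1/\Lambda$ genuinely requires convexity, and the technical core is the injectivity of $\Phi(\cdot,s)$ together with the identification of $\Phi(\partial\Omega,s)$ with the boundary of the inner parallel body. The remaining ingredients --- the second-order expansion in the first step and the one-line algebraic manipulation in the second --- are routine.
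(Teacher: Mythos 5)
Your ``$\geq$'' direction is fine and is the same local computation as the paper's (the paper phrases it as $\overline\kappa(x)\geq\limsup_{y\to x}Z(x,y)=\sup_{|v|=1}A^{\partial\Omega}(x)(v,v)$). The problem is the ``$\leq$'' direction. First note that, for \emph{any} domain, the interior ball condition $B_{1/\Lambda}\bigl(x-\Lambda^{-1}\gamma(x)\bigr)\subset\Omega$ at $x$ is \emph{equivalent} to $\overline\kappa(x)\leq\Lambda$, by exactly the one-line expansion of the square you perform (and, conversely, if no boundary point lies in the tangent ball, the ball is a connected set disjoint from $\partial\Omega$ meeting $\Omega$ near $x$, hence contained in $\Omega$). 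So your reduction to the ``interior ball condition of radius $1/\Lambda$'' does not simplify anything: that rolling-ball statement \emph{is} the lemma, and its entire content is hidden in the sentence ``convexity of $\Omega$ forces $\Phi(\cdot,s)$ to be injective''. That assertion is not proved and does not follow from any soft convexity property: if $\Phi(p,s)=\Phi(q,s)$ with $p\neq q$, then $p-q=s\,(\gamma(p)-\gamma(q))$, so the convexity (monotonicity) inequality $(p-q)\cdot(\gamma(p)-\gamma(q))\geq 0$ reads $s\,|\gamma(p)-\gamma(q)|^2\geq 0$ and is satisfied identically --- no contradiction. Ruling out such cut points before the focal radius $1/\Lambda$ is precisely Blaschke's rolling theorem (true for convex $C^2$ bodies, but a genuine theorem), and it is exactly where convexity must enter through a real geometric argument. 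The same circularity affects your identification of $\Phi(\partial\Omega,s)$ with the boundary of the inner parallel body: it presupposes ${\rm dist}(x-s\gamma(x),\partial\Omega)=s$, which is again the ball condition being proved. The standard rigorous route is a first-failure (continuity in $s$) argument in which the failure is either a focal point (excluded since $s\Lambda<1$) or a two-point tangency $\Phi(p,s)=\Phi(q,s)$, and it is the latter case that requires the convexity argument you have not supplied.

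For comparison, the paper handles exactly this point by contradiction: if $\sup\overline\kappa>\sup A$, then by the ``$\geq$'' direction and compactness the value $R^{-1}:=\sup_{\partial\Omega}\overline\kappa$ is attained at a pair of \emph{distinct} points $\overline x\neq\overline y$; all tangent balls $B_R(x-R\gamma(x))$ lie in $\Omega$, $\overline y$ lies on $\partial B_R(\overline x-R\gamma(\overline x))$, and $\Omega$ is trapped in the wedge or slab bounded by its support planes at $\overline x,\overline y$; slicing with a suitable $2$-plane then forces a circular arc of radius $R$ inside $\partial\Omega$, contradicting $\sup A<R^{-1}$. To repair your write-up you must either cite the rolling theorem as a known result or insert an argument of this type at the two-point-tangency step; as it stands, the proof has a genuine gap at its central claim.
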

\begin{proof}
We have 
\[
\overline \kappa(x) \geq \limsup_{y\to x} \frac{2(x-y)\cdot \gamma(x)}{|x-y|^2}= \sup_{v\in T_x\partial \Omega, |v|=1} A^{\partial \Omega}(x)(v,v),
\]
which establishes one inequality. Now assume by contradiction that the inequality is strict, i.e. 
\begin{equation}\label{contradiction assumption}
\sup_{\partial \Omega}\overline \kappa > \sup_{v\in T_x\partial \Omega, |v|=1}A^{\partial \Omega}(v,v).
\end{equation}
By \eqref{contradiction assumption} we can find two distinct points $\overline x,\overline y \in \partial \Omega$ such that
\[Z(\overline x,\overline y)=\sup_{\partial \Omega}\overline \kappa=:R^{-1}.\]
By definition of $\overline \kappa$ we have that for every $x\in \partial \Omega$
\[B_R(x-R\gamma(x)) \subset \Omega,\]
and since $Z(\overline x,\overline y) =R^{-1}$ we also have that 
\begin{equation}\label{inclusion}
\overline y \in \partial B_R(\overline x-R\gamma(\overline x)).
\end{equation}
W.l.o.g. we assume that $\overline x-R\gamma(\overline x)=0$.
Since $\Omega$ is convex we have that 
\[
\Omega \subset \{\overline x+x : x\cdot \overline x < 0\}
\cap  \{\overline y+x : x\cdot \overline y< 0\}=:W.
\]
That is, $\Omega$ is contained inside the slab or the wedge bounded by its affine tangent spaces at $\overline x$ and $\overline y$.
We consider two cases. First assume that $W$ is a wedge, i.e.
\[
P:={\rm span}\{\overline x,\overline y \} 
\]
is a $2$-dimensional subspace of $\mathbb R^n$. Then $\Omega \cap P$ is contained inside the cone
$W \cap P$. By convexity and by definition of $\sup_{\partial\Omega}\overline \kappa=R^{-1}$ we must have that the segment
\[
\partial B_R(0)\cap \{x:x\cdot(\gamma(\overline x)+\gamma(\overline y)) \geq 0\} \cap P
\]
is completely contained inside $\partial \Omega$, which however contradicts \eqref{contradiction assumption}.
Now, assume that
$W$ is a slab, i.e. $\overline x$ and $\overline y$ are co-linear. Choose a point $z\in \partial \Omega \cap W$. (If no such point existed, we would have $\Omega=W$, contradicting \eqref{contradiction assumption}.) 
Now let
\[
P:={\rm span}\{\overline x,z\}.
\]
Arguing similarly to the first case we see that $\partial \Omega$ must contain a circular segment of radius $R$ inside $P$ connecting $\overline x$ and $z$, which again 
contradicts \eqref{contradiction assumption}.
\end{proof}

\begin{cor}
Suppose $S=\partial \Omega$ for a convex set $\Omega\subset\mathbb R^n$ such that $A^{\partial \Omega}\leq k$.
Then
\[
2\pi
\leq  \frac{1}{4}\int  |\vec H|^2  \,d\mu + k\,\sigma(\mathbb R^n).
\]
Suppose $S= \partial(\mathbb R^n\setminus \Omega)$ for a convex set $\Omega\subset\mathbb R^n$ such that $A^{\partial \Omega}\geq - k$.
Then
\[
2\pi
\leq  \frac{1}{4}\int  |\vec H|^2  \,d\mu - k\,\sigma(\mathbb R^n).
\]
Moreover,
equality holds if and only if $\Sigma$ is a spherical cap or a flat unit disk.
\end{cor}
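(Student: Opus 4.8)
The plan is to deduce both inequalities from the Proposition above, which already gives $2\pi\le\frac14\int|\vec H|^2\,d\mu+\int\overline\kappa_{{\rm spt}(\sigma)}\,d\sigma$, by estimating the boundary term via the geometry of $\partial\Omega$. The observation to make at the start is that ${\rm spt}(\sigma)\subset S$: hence $\sigma(\mathbb R^n)=\sigma(S)$, and, the supremum defining $\overline\kappa_{{\rm spt}(\sigma)}(x)$ being taken over a subset of the one defining $\overline\kappa(x)$, one has $\overline\kappa_{{\rm spt}(\sigma)}(x)\le\overline\kappa(x)$ (and dually $\underline\kappa_{{\rm spt}(\sigma)}(x)\ge\underline\kappa(x)$) at every $x$. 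In the first case, $S=\partial\Omega$ with $\Omega$ convex and $A^{\partial\Omega}\le k$, I would then just chain $\overline\kappa_{{\rm spt}(\sigma)}(x)\le\overline\kappa(x)\le\sup_{\partial\Omega}\overline\kappa=\sup_{v\in T(\partial\Omega),\,|v|=1}A^{\partial\Omega}(v,v)\le k$, the middle equality being precisely the Lemma above; integrating against $\sigma$ and feeding the result into the Proposition yields $2\pi\le\frac14\int|\vec H|^2\,d\mu+k\,\sigma(\mathbb R^n)$.

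In the second case, $S=\partial(\mathbb R^n\setminus\Omega)$ with $\Omega$ convex, the new feature is that the outward unit normal $\gamma$ of $S$ is the \emph{inward} unit normal of $\Omega$, i.e. the negative of the outward unit normal $\nu$ of $\partial\Omega$. Consequently the integrand $2(x-y)\cdot\gamma(x)/|x-y|^2$ in the Proposition is the negative of the corresponding quantity for $(\partial\Omega,\nu)$, so that $\overline\kappa_{{\rm spt}(\sigma)}$ taken for $(S,\gamma)$ equals $-\,\underline\kappa_{{\rm spt}(\sigma)}$ taken for $(\partial\Omega,\nu)$, which by ${\rm spt}(\sigma)\subset S$ is $\le-\,\underline\kappa$ for $(\partial\Omega,\nu)$. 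What is then needed is the analogue of the Lemma for the exterior ball curvature of a convex $C^2$ domain, namely $\inf_{\partial\Omega}\underline\kappa=\inf_{v\in T(\partial\Omega),\,|v|=1}A^{\partial\Omega}(v,v)$; I would derive it from Blaschke's rolling theorem — an outer tangent ball of radius $1/\inf A^{\partial\Omega}$ contains $\Omega$, and this inclusion forces $2(x-y)\cdot\nu(x)/|x-y|^2\ge\inf A^{\partial\Omega}$ for all $x,y\in\partial\Omega$ — or else by rerunning the slab-versus-wedge contradiction argument of the Lemma with infima in place of suprema. Combined with the curvature hypothesis on $\partial\Omega$ this bounds $\int\overline\kappa_{{\rm spt}(\sigma)}\,d\sigma$ by $-k\,\sigma(\mathbb R^n)$, and the Proposition delivers $2\pi\le\frac14\int|\vec H|^2\,d\mu-k\,\sigma(\mathbb R^n)$.

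For the equality statement, equality in either inequality forces in particular equality in the Proposition, so $\Sigma$ must be a spherical cap or a flat unit disk by the equality case there; equality in the estimate of the boundary term then additionally pins down the radius of the associated sphere and forces $S$ to be tangent to it along $\partial\Sigma$ — exactly the configuration produced in the Proposition's equality discussion — and conversely such configurations realise equality. The step I expect to be the main obstacle, and really the only one not already contained in the Proposition, is the exterior-ball-curvature version of the Lemma: the inequality $\inf_{\partial\Omega}\underline\kappa\le\inf_v A^{\partial\Omega}(v,v)$ is the harmless limit $y\to x$, while the reverse bound $2(x-y)\cdot\nu(x)/|x-y|^2\ge\inf_v A^{\partial\Omega}(v,v)$ for \emph{all} pairs $x,y\in\partial\Omega$ is precisely where global convexity (through the rolling argument) has to be used.
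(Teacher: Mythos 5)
Your first case is exactly the intended argument: the paper states the corollary without proof because it is meant to be the chain $\overline\kappa_{{\rm spt}(\sigma)}\le\overline\kappa\le\sup_{\partial\Omega}\overline\kappa=\sup_{v}A^{\partial\Omega}(v,v)\le k$ fed into the preceding Proposition, with the equality case reduced, as you do, to the Proposition's equality discussion and hence to Proposition \ref{equalitycase}. For the second case you also correctly isolate the one ingredient not supplied by the paper, namely the exterior analogue of the Lemma, and your way of obtaining it is sound: for a compact convex body of class $C^2$, Blaschke's rolling theorem (or a rerun of the slab/wedge argument with infima) gives $2(x-y)\cdot\nu(x)/|x-y|^2\ge\inf_{v}A(v,v)$ for all $x,y\in\partial\Omega$, where $\nu$ and $A$ are taken with respect to the outward normal of $\Omega$.

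The gap is in your closing step ``combined with the curvature hypothesis on $\partial\Omega$ this bounds $\int\overline\kappa_{{\rm spt}(\sigma)}\,d\sigma$ by $-k\,\sigma(\mathbb R^n)$''. Your chain gives $\overline\kappa^{(S,\gamma)}_{{\rm spt}(\sigma)}\le-\underline\kappa^{(\partial\Omega,\nu)}\le-\inf_{v}A(v,v)$, so the bound $-k\,\sigma(\mathbb R^n)$ requires $\inf_{v}A(v,v)\ge k$, i.e. all principal curvatures of $\partial\Omega$ with respect to its own outward normal at least $k$ (equivalently $A\le-k$ with respect to $\gamma=-\nu$). The hypothesis as printed, $A^{\partial\Omega}\ge-k$, points the other way: read with respect to $\gamma$ it says the body's curvatures are at most $k$, and read with respect to $\nu$ it is vacuous for a convex body; under either literal reading the asserted inequality is false. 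Indeed, take $\Omega=B_R(0)\subset\mathbb R^3$ with $kR>1$ and let $\Sigma$ be the part outside $B_R$ of a unit sphere meeting $\partial B_R$ orthogonally; then $\frac14\int|\vec H|^2\,d\mu=2\pi\bigl(1+(R^2+1)^{-1/2}\bigr)$ and $\sigma(\mathbb R^n)=2\pi R(R^2+1)^{-1/2}$, so the right-hand side equals $2\pi\bigl(1+(1-kR)(R^2+1)^{-1/2}\bigr)<2\pi$, while the hypothesis $A\ge-k$ (with respect to $\gamma$) holds since the curvature is $1/R<k$. So you must say explicitly that you are reading the hypothesis as a lower bound $k$ on the principal curvatures of the convex body --- evidently what is intended, and under that reading your rolling-theorem lemma closes the proof --- rather than appealing to ``the curvature hypothesis'' as stated, which does not deliver the sign you need. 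With that correction your argument, including the equality discussion, matches the intended proof.
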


\begin{rmk}
The assumption that $\vec H \in L^p(\mu ; \mathbb R^n)$ with $p>2$ was only needed to ensure that the singular part $\sigma$ of the total variation measure $|\delta \mu|$ has no point masses which ensures that the integral
\[
\int \frac{2(x-x_0)\cdot\gamma }{|x-x_0|^2}  \,d\sigma  
\]
exists, and to ensure that the density at every boundary point is at least $1/2$. Alternatively, we could have supposed that $p=2$ and that $\mu$ is the image varifold of a $C^1$-immersion.
\end{rmk}

\noindent
{\bf Some observations concerning the $L^1$-tangent-point energy}
\\\\
Integration of \eqref{general identity} yields
\begin{align*}
2\pi &\leq  \frac{1}{4}\int  |\vec H|^2  \,d\mu + \mint \int \frac{2{\rm dist}(x-y,T_x\partial\Omega)}{|x-y|^2}  \,d\sigma(x)    \,d\sigma (y) .
\end{align*}
We note that in case $\sigma$ is $1$-rectifiable the double integral can be estimated in terms of the so called (cf.  \cite{MR2902275}) $L^1$-\emph{tangent-point energy} $\mathcal E_1(\sigma)$.
By definition we have
\[
\mathcal E_p(\sigma)
:= \int \int \frac{1}{R_{tp}(x,y)^p}  \,d\sigma(x)    \,d\sigma (y) ,
\]
where $R_{tp}(x,y)$ denotes the so called (cf.  \cite{MR2902275}) \emph{tangent-point radius} of $\sigma$ at $(x,y)$ given by
\[
R_{tp}(x,y)=
\frac{|x-y|^2}{2{\rm dist}(x-y,T_x\sigma)}.
\]
This leads to the following.
\begin{prop}\label{prop:tangent point energy estimate}
Let $\Gamma$ be a closed curve in $\mathbb R^3$ of class $C^{1,\alpha}$ for some $\alpha \in (0,1)$. Then
\begin{equation}\label{tangent point energy estimate}
2\pi \mathcal H^1(\Gamma) \leq \mathcal E_1(\Gamma),
\end{equation}
with equality only if $\Gamma$ is a planar, convex curve.
\end{prop}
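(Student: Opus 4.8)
The plan is to realise $\Gamma$ as the boundary of a minimal surface and then apply the identity \eqref{general identity} with $\vec H\equiv 0$. First I would solve the Plateau problem for $\Gamma$: let $T$ be an area-minimising integral $2$-current in $\mathbb R^3$ with $\partial T=[\Gamma]$, and let $\mu$ be the associated integer rectifiable $2$-varifold, $\Sigma:={\rm spt}\,\mu$. Then $\vec H\equiv 0$ by stationarity off $\Gamma$, and since $\Gamma\in C^{1,\alpha}$ the interior regularity theory in $\mathbb R^3$ (absence of branch points) together with boundary regularity for area minimisers (Allard's and Gr\"uter--Jost's, applicable since $\vec H\in L^\infty$) shows that near $\Gamma$ the set $\Sigma$ is a $C^{1,\alpha}$ manifold with boundary $\Gamma$. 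Hence $\delta\mu=\eta\,\sigma$ with $\sigma=\mathcal H^1\llcorner\Gamma$ and $\eta$ the outward unit conormal of $\Sigma$ along $\Gamma$, so that $\eta(x)\perp T_x\Gamma$, $|\eta|=1$, and $\theta^2(\mu,x)\ge\tfrac12$ for $x\in\Gamma$. Alternatively, one reduces to smooth $\Gamma$ by mollifying a constant-speed parametrisation: the lengths converge, and, using the uniform $C^{1,\alpha}$ bound to dominate the tangent-point integrand near the diagonal by $C|x-y|^{\alpha-1}$, the tangent-point energies converge by dominated convergence.

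With such a $\mu$ in hand I would run the computation of Section~\ref{general support surface}: testing the first variation with $X=\varphi\,|x-x_0|^{-2}(x-x_0)$, $\varphi=(|x-x_0|_\sigma^{-2}-\rho^{-2})^+|x-x_0|^2$, and letting $\rho\to\infty$ and $\sigma\downarrow 0$ yields \eqref{general identity} with $\vec H\equiv 0$, that is
\[
\theta^2(\mu,x_0)+\frac1\pi\int\left|\frac{(x-x_0)^\perp}{|x-x_0|^2}\right|^2 d\mu=\frac1{4\pi}\int\frac{2(x-x_0)\cdot\eta(x)}{|x-x_0|^2}\,d\sigma(x)
\]
for every $x_0\in\Gamma$. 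The only point requiring care is that the boundary term $b_{x_0}(r)=-\tfrac1{2\pi r^2}\int_{B_r(x_0)}(x-x_0)\cdot\eta\,d\sigma$ tends to $0$ as $r\downarrow 0$; this is where the H\"older regularity of $\Gamma$ enters, since $\eta(x)\perp T_x\Gamma$ and $\Gamma\in C^{1,\alpha}$ give $|(x-x_0)\cdot\eta(x)|\le C_\Gamma|x-x_0|^{1+\alpha}$ and $\sigma(B_r(x_0))\le C_\Gamma r$, whence $|b_{x_0}(r)|\le C_\Gamma r^{\alpha}\to 0$; the same bound makes the integral on the right absolutely convergent and the limit legitimate (and the density at $x_0\in\Gamma$ exists and is $\ge\tfrac12$ by the boundary monotonicity).

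To conclude, note that $\eta(x)\perp T_x\Gamma=T_x\sigma$, so for $\sigma$-a.e.\ $x$
\[
(x-x_0)\cdot\eta(x)\le|(x-x_0)\cdot\eta(x)|={\rm dist}\big(x-x_0,\eta(x)^\perp\big)\le{\rm dist}(x-x_0,T_x\Gamma)=\frac{|x-x_0|^2}{2R_{tp}(x,x_0)} .
\]
Discarding the nonnegative square term and using $\theta^2(\mu,x_0)\ge\tfrac12$ gives $2\pi\le\int R_{tp}(x,x_0)^{-1}\,d\sigma(x)$ for every $x_0\in\Gamma$, and integrating in $x_0$ over $\Gamma$ yields $2\pi\,\mathcal H^1(\Gamma)\le\mathcal E_1(\Gamma)$. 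If equality holds, then for $\mathcal H^1$-a.e.\ $x_0\in\Gamma$ one has $\theta^2(\mu,x_0)=\tfrac12$, $(x-x_0)^\perp=0$ for $\mu$-a.e.\ $x$, and $(x-x_0)\cdot\eta(x)={\rm dist}(x-x_0,T_x\Gamma)\ge 0$ for $\sigma$-a.e.\ $x$. The second condition forces $\Gamma\subset x+T_x\Sigma$ for $\mu$-a.e.\ interior point $x$; comparing the tangent planes at two such points (which would be distinct were $\Sigma$ not flat) shows $\Sigma$ is flat, so $\Gamma$ lies in a plane $P$ and $\Sigma$ is the region of $P$ it bounds. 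Then $\eta$ is the outward unit normal of $\Gamma$ inside $P$, and the last condition says $\Gamma$ lies on one side of each of its tangent lines, i.e.\ $\Gamma$ is a convex planar curve, exactly as in Proposition~\ref{equalitycase}.

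The main obstacle is the first step: producing a minimal surface spanning $\Gamma$ with enough boundary regularity for the monotonicity identity to be valid — equivalently, making the reduction to smooth $\Gamma$ together with the convergence of the tangent-point energies fully rigorous. The interior behaviour is controlled by the classical absence of branch points of area minimisers in $\mathbb R^3$, but the boundary regularity under merely $C^{1,\alpha}$ data, and the corresponding low-regularity version of the equality analysis, are the delicate points.
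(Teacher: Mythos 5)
Your argument is essentially the paper's own proof: span $\Gamma$ by a solution of the Plateau problem, apply the identity \eqref{general identity} with $\vec H\equiv 0$ and $\gamma$ replaced by the conormal $\eta$, integrate over $\Gamma$, bound $(x-y)\cdot\eta(x)$ by ${\rm dist}(x-y,T_x\Gamma)$ to get $\mathcal E_1(\Gamma)$, and run the equality analysis as in Proposition \ref{equalitycase}. The one step you flag as delicate -- boundary regularity of the spanning minimal surface for $C^{1,\alpha}$ data -- is handled in the paper by citing Hardt--Simon \cite{MR554379} (rather than Allard or Gr\"uter--Jost, the latter being a free-boundary result), after which your verification that $b_{x_0}(r)\to 0$ and the rest of the argument go through as written.
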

\begin{proof}
Let $\Sigma$ be a compact orientable minimal surface with boundary $\partial \Sigma = \Gamma$. Such a surface may be obtained by solving the Plateau problem. See for example \cite{MR554379} and the references therein.
The identity \eqref{general identity} in this context still holds with $\gamma$ replaced by $\eta$, the outward unit conormal of $\Sigma$. Integrating the identity \eqref{general identity} over $\partial \Sigma =\Gamma$ yields
\begin{align*}
 2\pi\mathcal H^1(\Gamma)+4&\int_\Gamma \int_{\Sigma}  \frac{\left| (x-y)^{\perp_x} \right|^2}{|x-y|^4}   \,d\mathcal H^2(x)\mathcal H^1(y)  
\\
&= \int_\Gamma \int_\Gamma \frac{2(x-y)\cdot\eta(x) }{|x-y|^2}  \,d\mathcal H^1(x)\,d\mathcal H^1(y) ,
\end{align*}
which is no greater than
\[
\int_\Gamma \int_\Gamma \frac{2{\rm dist}(x-y,T_x\Gamma)}{|x-y|^2}  \,d\mathcal H^1(x)\,d\mathcal H^1(y) = 
\mathcal E_1(\Gamma).
 \]
This establishes the inequality \eqref{tangent point energy estimate}.
Now assume that equality holds in \eqref{tangent point energy estimate}.
Then for any given point $y\in \Gamma$
\[
 \frac{(x-y)^{\perp_x} }{|x-y|^2}  =0 \quad \text{for $x\in\Sigma\setminus\{y\}$}.
\]
Arguing as in the proof of Proposition \ref{equalitycase} we see that $\Sigma$ is contained in a $2$-dimensional plane. Since in the equality case we have equalities everywhere in our estimates we also conclude that
\[
(x-y)\cdot\eta(x) ={\rm dist}(x-y,T_x\Gamma) \geq 0 \quad \text{for all $x,y\in\Gamma$}.
\]
That is, $\Gamma$ is convex. In particular, $\Gamma$ must be connected. 
\end{proof}

\begin{rmk}
After informing Simon Blatt about our inequality \eqref{tangent point energy estimate} he communicated to us the following alternative proof of Proposition \ref{prop:tangent point energy estimate} that works for closed curves of class $C^1$.
\begin{proof}(\cite{Blatt})
Let $y\in \Gamma$. Choose an arc length parametrization starting at $y$, i.e. let $c:[0,L]\to \mathbb R^3$ be  a curve with $c(0)=c(L)=y$, $|c'(s)|\equiv 1$, and ${\rm trace}(c)=\Gamma$. We define the curve $w$ by
\[
w(s):= \frac{c(s)-c(0)}{|c(s)-c(0)|}.
\]
The curve $w$ is of class $C^1$ on the open interval $(0,L)$, has limits $\lim_{s\downarrow 0}w(s)=c'(0)$ and $\lim_{s\uparrow L}w(s)=-c'(0)$, and maps into the unit sphere $\mathbb S^2$. Thus we have
\[
\pi 
=\lim_{\varepsilon \downarrow 0} {\rm dist}(w(\varepsilon),w(L-\varepsilon)) \leq \liminf_{\varepsilon \downarrow 0} \int_\varepsilon^{L-\varepsilon} |w'(s)|\,ds=\int_0^{L} |w'(s)|\,ds.
\]
A straightforward calculation shows that
\[
 |w'(s)| = \frac{1}{2}\frac{1}{R_{tp}(c(s),c(0))},
\]
and therefore
\[
2\pi 
\leq  \int_\Gamma \frac{1}{R_{tp}(x,y)}\,d\mathcal H^1(x).
\]
Integrating over $y$ yields the desired inequality. Note that we have equality if and only if the curve $w$ is a geodesic in $\mathbb S^2$, that is if \emph{and only} if $c$ is planar and convex.
\end{proof}
\end{rmk}

Applying H\"older's inequality twice we immediately obtain the following.
\begin{cor}\label{cor:penergy}
Let $\Gamma$ be a closed curve in $\mathbb R^3$ of class $C^1$. Then for any $p >1$ we have
\[
2\pi 
\leq \mathcal E_p(\Gamma)^\frac{1}{p} \mathcal H^1(\Gamma)^{1-\frac{2}{p}}
\]
with equality if and only if $\Gamma$ is a round circle.
\end{cor}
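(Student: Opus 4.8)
The plan is to start from the sharp inequality of Proposition~\ref{prop:tangent point energy estimate}, namely $2\pi\,\mathcal H^1(\Gamma)\le \mathcal E_1(\Gamma)$, and upgrade the exponent by applying H\"older's inequality twice, exactly as the statement of the corollary announces. Writing out $\mathcal E_1(\Gamma)$ as the double integral of $R_{tp}(x,y)^{-1}\,d\mathcal H^1(x)\,d\mathcal H^1(y)$, the first step is to apply H\"older in the inner variable $x$ with exponents $p$ and $p'=p/(p-1)$ to get, for each fixed $y$,
\[
\int_\Gamma \frac{1}{R_{tp}(x,y)}\,d\mathcal H^1(x)
\le \left(\int_\Gamma \frac{1}{R_{tp}(x,y)^p}\,d\mathcal H^1(x)\right)^{\frac1p}\mathcal H^1(\Gamma)^{1-\frac1p}.
\]
Integrating this over $y\in\Gamma$ and applying H\"older a second time, now in the variable $y$ (again with exponents $p$ and $p'$, pulling out a further factor $\mathcal H^1(\Gamma)^{1-\frac1p}$), yields
\[
\mathcal E_1(\Gamma)\le \mathcal E_p(\Gamma)^{\frac1p}\,\mathcal H^1(\Gamma)^{2-\frac2p}.
\]
Combining with $2\pi\,\mathcal H^1(\Gamma)\le\mathcal E_1(\Gamma)$ and dividing by $\mathcal H^1(\Gamma)$ gives $2\pi\le \mathcal E_p(\Gamma)^{1/p}\mathcal H^1(\Gamma)^{1-2/p}$, which is the claimed inequality.

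For the equality discussion, I would trace the equality conditions through both applications of H\"older together with the equality condition of Proposition~\ref{prop:tangent point energy estimate}. If equality holds in the final estimate, then it holds in the base inequality $2\pi\,\mathcal H^1(\Gamma)\le\mathcal E_1(\Gamma)$, so by Proposition~\ref{prop:tangent point energy estimate} $\Gamma$ is a planar convex curve; moreover equality in the two H\"older steps forces $R_{tp}(x,y)$ to be $\mathcal H^1$-a.e. constant in $x$ for a.e.\ $y$, hence (by continuity, using the $C^1$ regularity) constant in $(x,y)$. For a planar convex curve the tangent-point radius being globally constant is easily seen to characterize the round circle: indeed, using Blatt's formula $|w'(s)| = \tfrac12 R_{tp}(c(s),c(0))^{-1}$ from the preceding remark, constancy of $R_{tp}$ means the Gauss-type image curve $w$ traverses $\mathbb S^2$ at constant speed along a geodesic, which pins down $c$ as a circle of that radius (alternatively, a direct planar computation: constant $R_{tp}$ along a convex curve forces the curvature to be constant). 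Conversely, for a round circle one checks directly that $R_{tp}$ is constant, so all inequalities become equalities.

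The routine part is the two H\"older applications, which are immediate. The only point requiring a little care is the promotion of the a.e.\ equality conditions to everywhere statements and the final characterization of the round circle; here the $C^1$ hypothesis on $\Gamma$ ensures $R_{tp}$ is continuous off the diagonal (with the appropriate limiting behavior on the diagonal), so the a.e.\ constancy in the H\"older equality cases genuinely forces global constancy, and then either Blatt's identity or a short direct argument identifies the circle. I expect this equality analysis to be the main (though still minor) obstacle; everything else is a one-line consequence of Proposition~\ref{prop:tangent point energy estimate} and H\"older's inequality.
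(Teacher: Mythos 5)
Your proposal is correct and follows essentially the same route as the paper, which simply applies H\"older's inequality twice to the sharp $L^1$ estimate $2\pi\,\mathcal H^1(\Gamma)\le\mathcal E_1(\Gamma)$ (using Blatt's argument from the preceding remark to cover $C^1$ curves). Your equality discussion, tracing the H\"older equality conditions to global constancy of $R_{tp}$ and then to the round circle, fills in details the paper leaves implicit and is sound.
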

\begin{rmk}
Corollary \ref{cor:penergy} answers a question raised by Strzelecki, Szuma{\'n}ska and von der Mosel \cite{MR3091327}.
\end{rmk}


\begin{thebibliography}{99}

\bibitem{MR0307015}
W. K. Allard, \textit{On the first variation of a varifold},
Ann. of Math. (2), {\bf 95}, (1972), 417--491.
				
\bibitem{MR3011290}
B. Andrews \& M. Langford \& J. McCoy, \textit{Non-collapsing in fully non-linear curvature flows},
Ann. Inst. H. Poincar\'e Anal. Non Lin\'eaire, {\bf 30}, (2013), 23--32.
				
\bibitem{Blatt}
S. Blatt, \textit{Personal communication},
(2014).

\bibitem{MR2971209}
T. Bourni \& G. Tinaglia, \textit{Density estimates for compact surfaces with total boundary curvature less than {$4\pi$}},
Comm. Partial Differential Equations, {\bf 37}, (2012), 1870--1886.

\bibitem{MR485012}
K. Brakke, \textit{The motion of a surface by its mean curvature},
Mathematical Notes, {\bf 20}, Princeton University Press, (1978), i+252.

\bibitem{MR2972603}
S. Brendle, \textit{A sharp bound for the area of minimal surfaces in the unit ball},
Geom. Funct. Anal., {\bf 22}, (2012), 621--626.

\bibitem{MR2169583}
C. De Lellis \& S. M{\"u}ller, \textit{Optimal rigidity estimates for nearly umbilical surfaces},
J. Differential Geom., {\bf 69}, (2005), 75--110.		
		
\bibitem{MR2232206}
C. De Lellis \& S. M{\"u}ller, \textit{A {$C^0$} estimate for nearly umbilical surfaces},
Calc. Var. Partial Differential Equations, {\bf 26}, (2006), 283--296.

\bibitem{MR2566733}
E. Di Benedetto, \textit{Partial differential equations},
Birkh\"auser Boston Inc., 2nd edition, (2010), xx+389.

\bibitem{MR1888799}
T. Ekholm, B. White \& D. Wienholtz, \textit{Embeddedness of minimal surfaces with total boundary curvature at most {$4\pi$}},
Ann. of Math. (2), {\bf 155}, (2002), 209--234.

\bibitem{MR2770439}
A. Fraser \& R. Schoen, \textit{The first {S}teklov eigenvalue, conformal geometry, and minimal surfaces},
Adv. Math., {\bf 5}, (2011), 4011--4030.

\bibitem{MR863638}
M. Gr{\"u}ter \& J. Jost, \textit{Allard type regularity results for varifolds with free boundaries},
Ann. Scuola Norm. Sup. Pisa Cl. Sci. (4), {\bf 13}, (1986), 129--169.

\bibitem{MR554379}
R. Hardt \& and L. Simon, \textit{Boundary regularity and embedded solutions for the oriented {P}lateau problem},
Ann. of Math. (2), {\bf 110}, (1979), 439--486.
		
\bibitem{MR2119722}
E. Kuwert \& R. Sch{\"a}tzle, \textit{Removability of point singularities of {W}illmore surfaces},
Ann. of Math. (2), {\bf 160}, (2004), 315--357.

\bibitem{MR0244627}
O. A. Ladyzhenskaya \& N. N. Ural{$'$}tseva,
\textit{Linear and quasilinear elliptic equations},
Academic Press, Translated from the Russian by Scripta Technica, Inc., (1968), xviii+495.

\bibitem{1310.4971}
T. Lamm \& R. Sch\"atzle, \textit{Optimal rigidity estimates for nearly umbilical surfaces in arbitrary codimension},
arXiv:1310.4971 [math.DG], (2013 preprint).

\bibitem{MR674407}
P. Li \& S. T. Yau, \textit{A new conformal invariant and its applications to the {W}illmore conjecture and the first eigenvalue of compact surfaces},
Invent. Math., {\bf 69}, (1982), 269--291.     

\bibitem{MR3008339}
T. Rivi{\`e}re, \textit{Lipschitz conformal immersions from degenerating {R}iemann surfaces with {$L^2$}-bounded second fundamental forms},
Adv. Calc. Var., {\bf 6}, (2013), 1--31.     

\bibitem{MR1338315}
A. Ros \& E. Vergasta, \textit{Stability for hypersurfaces of constant mean curvature with free boundary},
Geom. Dedicata, {\bf 56}, (1995), 19--33.     
	
\bibitem{MR2592972}
R. Sch{\"a}tzle, \textit{The {W}illmore boundary problem},
Calc. Var. Partial Differential Equations, {\bf 37}, (2010), 275--302.     
						     
\bibitem{MR1243525}
L. Simon, \textit{Existence of surfaces minimizing the {W}illmore functional},
Comm. Anal. Geom., {\bf 1}, (1993), 281--326.

\bibitem{MR2902275}
P. Strzelecki \& H. von der Mosel, \textit{Tangent-point self-avoidance energies for curves},
J. Knot Theory Ramifications, {\bf 21}, (2012), 1250044, 28.

\bibitem{MR3091327}
P. Strzelecki \& M. Szuma{\'n}ska \& H. von der Mosel, \textit{On some knot energies involving {M}enger curvature},
Topology Appl., {\bf 160}, (2013), 1507--1529.
		
\end{thebibliography}
\end{document}